\numberwithin{equation}{section}
\newcommand{\R}{\mathbb{R}}
\newcommand{\D}{\mathbb{D}}
\newcommand{\E}{\mathbb{E}}
\newcommand{\eP}{\mathbb{P}}
\newcommand{\iC}{\mathcal{C}}
\newcommand{\iM}{\mathcal{M}}
\newcommand{\iL}{\mathcal{L}}
\newcommand{\iX}{\mathcal{X}}
\newcommand{\iA}{\mathcal{A}}
\newcommand{\test}{\varphi}
\newcommand{\ind}{\mathds{1}}
\newcommand{\Fit}{\mathrm{Fit}}
\newcommand{\eps}{\varepsilon}
\newtheorem{theorem}{Theorem}[section]
\newtheorem{prop}{Proposition}[section]
\newtheorem{cor}{Corollary}[section]
\newtheorem{remark}{Remark}[section]
\newtheorem{example}{Example}[section]
\newtheorem{defn}{Definition}[section]
\newtheorem{assume}{Assumption}
\title{Evolution
of a trait distributed 
over a large 
fragmented population: Propagation of chaos meets adaptive dynamics}
\author[1,3]{Amaury Lambert}
\author[2,4]{H\'el\`ene Leman}
\author[1]{H\'el\`ene Morlon}
\author[5]{Josu\'e Tchouanti}
\affil[1]{Institut de Biologie de l'ENS (IBENS), \'Ecole Normale Sup\'erieure (ENS), CNRS UMR 8197, INSERM U1024, PSL University, Paris, France}
\affil[2]{CASTING, Inria, Inserm, ENS de Lyon, Centre Léon Bérard, CNRS, UCBL1}
\affil[3]{Center for Interdisciplinary Research in Biology (CIRB), Coll\`ege de France, CNRS UMR 7241, INSERM U1050, PSL University, Paris, France}
\affil[4]{Université de Lyon, ENS de Lyon, UMPA CNRS UMR 5669.}
\affil[5]{LIFEWARE, INRIA Saclay, France}
\date{\today}
\begin{document}

\maketitle

\begin{abstract}
    We consider a metapopulation made up of $K$ demes, each containing $N$ individuals bearing a heritable quantitative trait. Demes are connected by migration and undergo independent Moran processes with mutation and selection based on trait values.
    Mutation and migration rates are tuned so that each deme receives a migrant or a mutant in the same slow timescale and is thus essentially monomorphic at all times for the trait (adaptive dynamics). 

    In the timescale of mutation/migration, the metapopulation can then be seen as a giant spatial Moran model with size $K$ that we characterize. As $K\to \infty$ and physical space becomes continuous, the empirical distribution of the trait (over the physical and trait spaces) evolves deterministically according to an integro-differential evolution equation. In this limit, the trait of every migrant is drawn from this global distribution, so that conditional on its initial state, traits from finitely many demes evolve independently (propagation of chaos). 
    
    Under mean-field dispersal, the value $X_t$ of the trait at time $t$ and at any given location has a law denoted $\mu_t$ and a jump kernel with two terms: a mutation-fixation term and a migration-fixation term involving $\mu_{t-}$ (McKean-Vlasov equation).
    
In the limit where mutations have small effects and migration is further slowed down accordingly, we obtain the convergence of $X$, in the new migration timescale, to the solution of a stochastic differential equation which can be referred to as a new canonical equation of adaptive dynamics. This equation includes an advection term representing selection, a diffusive term due to genetic drift, and a jump term, representing the effect of migration, to a state distributed according to its own law. 

    
\end{abstract}

\paragraph{Keywords:} Moran model $\cdot$ Metapopulation $\cdot$ Propagation of chaos $\cdot$ Adaptive dynamics $\cdot$ Trait substitution sequence $\cdot$ Canonical equation $\cdot$ Migration $\cdot$ Diffusion with jumps $\cdot$ Ecology $\cdot$ Macroevolution

\paragraph{MSC 2020:} 60F17 $\cdot$ 60J25 $\cdot$ 60J70 $\cdot$ 60J76 $\cdot$ 60K35 $\cdot$ 35Q70 $\cdot$ 92D15 $\cdot$ 92D40

\section{Introduction}
\indent

To understand the origin of the diversity, through time and among taxa, of a phenotypic trait (body mass, beak shape, leaf size, wing color...) is one of the central questions in evolutionary biology. Several approaches to address this question exist, depending on the scale studied.\\

On the slowest timescale, i.e., the scale of paleontology and macroevolution, the evolution of a trait, or rather of its species/population average, is frequently modeled by a stochastic process such as Brownian motion \cite{felsenstein1973maximum, hansen1996translating}, the Ornstein-Uhlenbeck process \cite{hansen1997stabilizing,butler2004phylogenetic} or a L\'evy process \cite{landis2013phylogenetic, manceau2020model}. This top-down approach notably serves in comparative phylogenetic methods for the inference of macro-evolutionary dynamics from the knowledge of traits of present or fossil species, see \cite{pennell2013integrative} for a review.\\

On the fastest timescale, i.e., the scale of a few generations, the distribution of a quantitative trait in a population is often assumed to be normal and its evolution is described through the dynamics of its mean and variance. 

In quantitative genetics, the value of a trait is the sum of an environmental component and of a genetic, heritable component, often modeled as the sum of allelic effects at different loci. Selection on the trait (assumed here to be one-dimensional) is typically modeled by a Gaussian-like fitness function of the trait (stabilizing selection) with variance $\omega^2$ and a possibly moving optimum (directional selection).  Russell Lande and his coauthors showed in a series of foundational papers \cite{lande1976natural, lande1979quantitative, lande1983measurement, burger1994distribution} that under these normality assumptions, the average trait follows an autonomous equation pushing it toward the optimum, with a Gaussian deviation with second moment $A/N$ due to genetic drift, where $A$ is an increasing function of the additive genetic variance $\sigma_g^2$, with $A(0)=\omega^2/2$ (in the absence of environmental noise) and $A(x)\sim x$ when $x$ is large. 
The additive genetic variance $\sigma_g^2$ measures the degree of heritable polymorphism in the population. Its value at equilibrium decreases with population size/strength of selection (i.e., increases with $\omega$) and increases with mutation rate. It is essentially zero when selection is strong enough to deplete diversity and mutation rate is too small to replenish it.

In this latter case, one observes a separation of timescales: ecological dynamics occur quickly while mutations are rare, so that only a few trait values coexist at any given time. 
The theory of adaptive dynamics pushes this assumption to the extreme case where fixation occurs before the next mutation event and the average trait becomes the only trait value in the population, except during the short period when the resident trait and the mutant trait compete. Under this assumption, Metz et al~\cite{metz1995adaptive} and Dieckmann \& Law \cite{dieckmann1996dynamical}  derived what is known as the canonical equation of adaptive dynamics, which describes the evolution of the so-called dominant trait in the timescale of mutations. Later, Champagnat et al~\cite{champagnat2006unifying} and Baar et al~\cite{baar2017stochastic} formally derived this equation in the context of large populations. 

At the cost of suppressing all the information on polymorphism, this alternative approach offers several benefits: 1) by considering two-player games (resident vs. mutant), it allows for richer ecological dynamics than stabilizing selection resulting from an absolute fitness function, 2) the fitness of the mutant in the resident background emerges naturally from the ecology (see below) rather than being given a priori, and 3) the evolution of the dominant trait in the population can be rigorously derived mathematically, without the normality assumptions required for the average trait in a polymorphic population to have autonomous dynamics. See the companion paper~\cite{vo2024} for a tentative synthesis.

Although adaptive dynamics theory traditionally deals with large populations, the assumption that fixation occurs rapidly compared to mutations should be more relevant for small populations. Champagnat \& Lambert~\cite{chamLamb} extended the approach of adaptive dynamic to finite populations, where the possible fixation of traits with suboptimal fitness introduces stochasticity into the canonical equation of adaptive dynamics. Here, the fitness of the mutant is its probability of fixation in a resident population at stochastic equilibrium.  
In the limit of small mutations, trait dynamics are then described by a diffusion process, called the canonical diffusion of adaptive dynamics, with a similar advection term as derived by Dieckmann \& Law \cite{dieckmann1996dynamical}, as well as a diffusion term due to genetic drift. As in \cite{lande1976natural}, this diffusion term scales like $1/N$, but it depends on the covariance matrix of the mutation kernel rather than on the Hessian of the fitness function at the optimum. \\

The two approaches (quantitative genetics and the canonical diffusion of adaptive dynamics) have the merit to unveil the micro-evolutionary underpinnings of trait evolution but suffer from inherent contradictions regarding population size:
The first one (quantitative genetics) needs population size to be both finite (for genetic drift to play a role) and infinite (for normality of trait distribution to hold, which is required for the dynamics of the average trait to be autonomous). The second one (the canonical diffusion of adaptive dynamics) needs population size to be small (for fixation of slightly deleterious mutations to be possible and fast) but large enough to avoid extinction.
In addition, both approaches assume that the population is panmictic, although this may be a reasonable assumption only on the scale of a few $N$ generations, as panmixia cannot last on macroevolutionary timescales, e.g., where speciations are likely to occur.

Introducing spatial structure and environmental heterogeneity into the canonical diffusion of adaptive dynamics offers a potential solution to these issues. In this article, we consider a population made up of a large number $K$ of demes or patches harboring a finite number $N$ of individuals and connected by migration. We can thus let $K$ be large and even tend to $\infty$ so as to get a continuous trait distribution, but leave $N$ finite, so as to keep sources of stochasticity potentially playing a role at the macroevolutionary scale: genetic drift (in each deme) and local sampling.

We analyze different scalings of this spatially structured, microscopic model, progressively considering rare mutations, a large number of patches and finally small mutations, sticking to the adaptive dynamics framework.\\

In the first part of the paper, we focus on the rare mutation limit. By appropriately rescaling time, we derive a limit described by a series of connected Trait Substitution Sequences (TSS), similar to the results of Champagnat \& Lambert~\cite{chamLamb} and Lambert et al~\cite{vo2024}. Beyond this, we introduce a more innovative scaling, where the number $K$ of patches becomes large while keeping the number $N$ of individuals per patch finite. In this regime, we analyze the coupling of the total metapopulation dynamics with the dynamics of any finite subset of patches. This result is notable for two key reasons. First, it connects to the theory of propagation of chaos, yielding a novel limiting process characterized by jumps in its dynamics~\cite{sznitman1991topics, chaintron2021propagation, chaintron2022propagation}. Second, it provides a justification for the structured metapopulation models introduced by Gyllenberg et al~\cite{gyllenberg1997structured}, which have been extensively used in spatial ecology. These models simplify metapopulation dynamics into two levels: deterministic macroscopic dynamics for the entire metapopulation and stochastic local dynamics within individual patches. For previous works on those topics, we refer the reader to  \cite{del2000moran, hutzenthaler2020propagation, cloez2022uniform, cordero2024wright} for propagation of chaos in population genetic models and to \cite{ holyoak2005metacommunities, lehmann2008adaptive, allen2013adaptive, papaix2013dynamics, wakano2014evolutionary} for adaptive dynamics in a metapopulation setting.

In the second part of the paper, we introduce the assumption of small mutations and examine two timescales. 
If we do not rescale time, mutants with significantly different trait do not have time to arise and we are left, under mean-field dispersal, with 
a multitype, antisymmetric Lotka-Volterra   
system. Although analyzing this system is nontrivial, as we discuss in the main text, we provide insights into specific cases.

If we rescale time so as to see new trait mutants arise, we need to simultaneously rescale migration to get convergence to an equation that can be interpreted as a canonical equation of adaptive dynamics for metapopulations. As before, this limiting process operates on two levels: a macroscopic metapopulation level and a local patch level. At the local level, the dynamics are driven by a diffusion process with jumps--a feature, to the best of our knowledge, not previously observed.\\

The paper is organized as follows. In Section~\ref{sec:model}, we present the microscopic model and the main assumptions. Section~\ref{sec:TSScoupled} displays the results regarding the limiting process when assuming rare mutations and migrations. In Section~\ref{sec:propchaos}, we present a result of propagation of chaos when the number of patches grows to infinity. In Section~\ref{sec:smallmut}, we finally add an assumption of small mutations and give two possibles limiting processes. If the migration rate is not small, there already exists an interesting behavior to study in the natural time scale, which is presented in Section~\ref{sec:migr-small-mut}. Otherwise, if the migration rate is small, we derive a canonical equation of adaptive dynamics with diffusion and jumps by rescaling time. This final result is presented in Section~\ref{sec:canonic}. Finally, Section~\ref{sec:proofs} is devoted to the proofs.

\section{Modeling assumptions}\label{sec:model}
    We consider a metapopulation consisting of $K$ interacting demes, also called patches or sites, labelled $\ell=1,\ldots,K$, each containing a population of fixed size $N\geq 1$. The $\ell$-th patch is composed of individuals with labels $i=(\ell-1)N+1,\ldots,\ell N$ and characterized by their phenotypic traits $x^i\in\R^d$. The joint dynamics of trait values follow a multivariate time-continuous birth--death process that can be seen as $K$ coupled Moran models \cite{moran1958random}:
    \begin{itemize}
        \item \textbf{Within-patch resampling.} For each pair of individuals with traits $x$ and $y$ in the $\ell$-th patch, the individual with trait $x$ is replaced with a new individual with trait $y$, at rate $c(\frac{\ell}{K},x,y)>0$, which may depend on the two traits $x$ and $y$ (selection) and on the patch label $\ell$ (spatial heterogeneity);
        
        \item \textbf{Mutation.} Each individual with trait $x$ in the $\ell$-th patch mutates at rate $\gamma\theta(\frac{\ell}{K},x)\geq 0$, and acquires a new trait chosen according to the law $Q(\frac{\ell}{K},x,dy)$, which may depend on the former trait $x$ and on the patch label $\ell$;
        
        \item \textbf{Migration.} For each pair of individuals with traits $x$ and $y$ and belonging to patches $\ell$ and $\ell'$ respectively, the individual with trait $x$ in the $\ell$-th patch is replaced with an individual with trait $y$ at rate $\gamma\lambda((\frac{\ell}{K},x),(\frac{\ell'}{K},y))/K\geq 0$. 
    \end{itemize}
    From now on, we make the following
    \begin{assume} \label{ass:rates}
    The maps $c$, $\theta$, $\lambda$ are non-negative, measurable, and there exists $C>0$ such that for all $(r,r', x,y)\in [0,1]^2\times (\R^d)^2 $,
      $$
      c(r,x,y)\leq C, \quad \theta(r,x)\leq C, \text{ and }\ \lambda((r,x),(r',y))\leq C.
      $$
    \end{assume}
    
    Notice that this model can incorporate spatially heterogeneous mechanisms of selection, mutation and migration by allowing all kernels to depend on patch labels and trait values. Notice also that we included a scaling parameter $\gamma$, which will be assumed to vanish in order to study the limit of rare mutations and migrations. The scaling of the migration kernel by $K$ puts mutations and migrations in the same time scale, even when $K$ is large. In this framework, which resembles the assumptions of adaptive dynamics \cite{dieckmann1996dynamical}, we expect low levels of diversity in each patch.  
    If $\gamma$ is small enough, in each patch with finite population size $N$, fixation of one single trait will even occur before the next event of mutation or migration, 
    ensuring that in the mutation/migration timescale, all individuals of the same patch carry the same trait value, that we call \textit{dominant}. Our goal is to describe the joint dynamics of dominant traits over the metapopulation. In Section~\ref{sec:TSScoupled}, we will keep the number $K$ of patches finite, then in Sections~\ref{sec:propchaos} and~\ref{sec:smallmut}, we will let $K\to\infty$. \\
    
    Let us describe the total population, present in the entire metapopulation, using the measure-valued stochastic process defined by
    \begin{equation}
        \nu^{\gamma,K}_t = \frac{1}{K}\sum_{\ell=1}^K\frac{1}{N}\sum_{i=1}^{N}\delta_{(\frac{\ell}{K},x^{H^\ell(i)}_t)} , \forall t\geq 0
    \end{equation}
    where $H^\ell(i)=i + (\ell-1)N$ is the label of the $i$-th individual in the $\ell$-th patch, and $x^{H^\ell(i)}_t\in\R^d$ denotes its phenotypic trait at time $t\geq 0$. This process describes the distribution of traits in the entire metapopulation 
    More specifically, it is a c\`ad-l\`ag Markov process with values in
    \begin{equation}
        \iM^K_1 = \left\{ \frac{1}{NK}\sum_{i=1}^{NK} \delta_{(\frac{\ell^i}{K},x^i)}, \textrm{ with }x^i\in\R^d, \ell^i\in\llbracket 1,K\rrbracket \right\} \subset 
        \iM_1(\iX)
    \end{equation}
    where $\iX = [0,1]\times\R^d$ 
    and $\iM_1(\iX)$ is the set of probability measures on $\iX$, 
    endowed with the trace of the weak topology on the space $\iM_F(\iX)$ of finite measures on $\iX$. According to our previous description of the process, we can define the process $(\nu^{\gamma,K}_t)_{t\geq 0}$ through its infinitesimal generator that is, for any test function $\phi\in\iC_b(\iM^K_1)$,
    \begin{equation}\label{eq_generator}
    \begin{aligned}
        \iL^{\gamma,K}\phi(\nu) & = NK\iint_{\iX}\nu(dr,dx)\left( NK\iint_{\iX}\ind_{r'=r}\nu(dr',dy) \right)c(r,x,y)\left[ -\phi(\nu) + \phi\bigg( \nu - \frac{\delta_{(r,x)}}{NK} + \frac{\delta_{(r,y)}}{NK} \bigg) \right]
        \\
        &~ + NK\gamma\iint_{\iX}\theta(r,x)\nu(dr,dx)\int_{\R^d}Q(r,x,dy)\left[ -\phi(\nu) + \phi\bigg( \nu - \frac{\delta_{(r,x)}}{NK} + \frac{\delta_{(r,y)}}{NK} \bigg) \right]
        \\
        &~ + N^2K\gamma\iint_{\iX}\nu(dr,dx)\iint_{\iX}\ind_{r'\neq r}\nu(dr',dy)\lambda((r,x),(r',y))\left[ -\phi(\nu) + \phi\bigg( \nu - \frac{\delta_{(r,x)}}{NK} + \frac{\delta_{(r,y)}}{NK} \bigg) \right].
    \end{aligned}
    \end{equation}
Under Assumption \ref{ass:rates}, it is standard, using Poisson Point Measures as in \cite{fourMel}, that the generator given by~\eqref{eq_generator} defines a unique stochastic Markov process $(\nu_t^{\gamma,K}, t\geq 0)$ on $\iM_1(\iX)$.

\section{Coupled trait substitution sequences}\label{sec:TSScoupled}
    In this section, our aim is to let $\gamma\to 0$, i.e., study the dynamics of the process in the limit of rare mutations and rare migrations. As explained previously, in this regime ($\gamma\to 0$), the time is sufficiently large between two successive mutation/migration events that, in these intervals, the patches behave like independent Moran models and should reach their stationary regime which is a monomorphic state. Accelerating time, we expect the process to converge to a jump process that describes the dynamics of the dominant trait within each patch, with jump rates that inherently couple interactions across different patches. In the case of a single patch, this framework reduces to the Trait Substitution Sequence (TSS), originally introduced in Metz \& Geritz~\cite{metz1995adaptive}, and later formalized rigorously by Champagnat \cite{champagnat2006microscopic}. In this work, we are interested in the more intricate scenario involving multiple patches. 
    
    Before stating our result, we need to introduce $\alpha(r,y,x)$ as the invasion probability, in the absence of mutations in and of migrations to and from patch $r$, of a single individual with trait $y$ in the patch $r$ otherwise filled with a monomorphic population of $x$-individuals, that is, under the initial condition $\delta_{y} + (N-1)\delta_x$. The number $N^x_t$ of individuals with trait $x$ in patch $r$ follows a birth--death process with the following transition rates
    \begin{equation}
        n \xrightarrow[]{\textrm{ jumps to }} \left\{
        \begin{aligned}
            n+1 & \textrm{ at rate } c(r,y,x)n(N-n)
            \\
            n-1 & \textrm{ at rate }c(r,x,y)n(N-n)\, .
        \end{aligned}\right.
    \end{equation}
    In this setting, $\alpha(r,y,x)$ is the probability that $N^x_t$ hits the absorbing state 0 for some $t\geq 0$ starting from $N^x_0=N-1$. Computing this probability with classical tools for birth--death processes, 
    we easily get that $\alpha(r,y,x)=0$ if $c(r,x,y)=0$, and otherwise 
    $$
    \alpha(r,y,x) = \left( 1 + \sum_{k=1}^{N-1}\left( \frac{c(r,y,x)}{c(r,x,y)} \right)^k \right)^{-1},
    $$
    which, as expected, equals the neutral fixation probability $\frac 1N$  when $c(r,y,x)=c(r,x,y)$.
    Under Assumption~\ref{ass:rates} again, our result reads as follows. 
    \begin{prop}\label{prop:TSS}
        Assume that all patches are initially monomorphic. Then the sequence $\left\{\left(\nu^{\gamma,K}_{t/\gamma}\right)_{t\geq 0}, \gamma>0\right\}$ converges in the sense of finite-dimensional distributions as $\gamma\to 0$ to the process $\nu^K_t = \frac{1}{K}\sum_{\ell=1}^K\delta_{(\ell/K,X^{\ell,K}_t)}$, where $X^K = (X^{1,K},\ldots,X^{K,K})$ is a $(\R^d)^K-$valued pure-jump Markov process 
        described by the following transition rates 
        \begin{equation}\label{procTSS}
            x \xrightarrow[]{\textrm{ jumps to }}\left\{
            \begin{array}{llr}
                \displaystyle \left( x^1,\ldots,x^{\ell-1},y,x^{\ell+1},\ldots,x^K \right) & \textrm{at rate }N\,\theta(\frac{\ell}{K},x^\ell)\,\alpha(\frac{\ell}{K},y,x^\ell)\, 
                Q(\frac{\ell}{K},x^\ell,dy), &\ell=1,\ldots,K
                \\
                \displaystyle \left( x^1,\ldots,x^{\ell-1},x^{\ell'},x^{\ell+1},\ldots,x^K \right) & \textrm{at rate } \frac{N^2}{K}\,\lambda((\frac{\ell}{K},x^\ell), (\frac{\ell'}{K},x^{\ell'}))\, \alpha(\frac{\ell}{K},x^{\ell'},x^{\ell}), &\ell, \ell'=1,\ldots,K.
            \end{array}\right.
        \end{equation}
    \end{prop}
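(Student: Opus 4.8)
The plan is to exploit the separation of timescales between the fast within-patch resampling, occurring at rate $O(1)$, and the slow mutation and migration events, occurring at rate $O(\gamma)$, following the strategy used by Champagnat \cite{champagnat2006microscopic} for the single-patch trait substitution sequence and its finite-population refinement by Champagnat \& Lambert \cite{chamLamb}. Because $N$ and $K$ are both fixed, the total rate of a mutation or migration event out of any monomorphic configuration is $O(\gamma)$, while each such event opens a competition phase governed by a \emph{finite-state} birth--death chain --- the number $N^x$ of residents described before the statement --- which is absorbed in $O(1)$ time with exponentially small tails, uniformly in $\gamma$. First I would record the one-patch invasion analysis: conditionally on a mutation in patch $\ell$ producing a trait $y$ (resp.\ a migration into patch $\ell$ of a trait $x^{\ell'}$), and in the absence of any further event, the gambler's-ruin computation already quoted shows that the new trait fixes with probability exactly $\alpha(\frac{\ell}{K},y,x^\ell)$ (resp.\ $\alpha(\frac{\ell}{K},x^{\ell'},x^\ell)$), and otherwise is eliminated, returning the patch to its original monomorphic state.

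The core estimate is that the competition resolves before the next mutation or migration event with probability $1-O(\gamma)$. Indeed, the expected number of extra events occurring during one competition phase is at most $(\text{event rate})\times(\text{expected competition duration})=O(\gamma)\cdot O(1)=O(\gamma)$, and this single bound simultaneously controls the appearance of a third trait in a patch and the migration of the transient minority type to another patch. On this good event the net effect of one mutation/migration event is either a clean substitution of the dominant trait or no change at all, so the embedded sequence of dominant-trait configurations is, up to $O(\gamma)$ errors, a Markov chain whose transition rates are the products of the event rate and the fixation probability. After accelerating time by $1/\gamma$ the holding times become $O(1)$ and these products reproduce exactly the rates in \eqref{procTSS}: the mutation--fixation rate $N\,\theta(\frac{\ell}{K},x^\ell)\,\alpha(\frac{\ell}{K},y,x^\ell)\,Q(\frac{\ell}{K},x^\ell,dy)$, where the factor $N$ counts the individuals that may mutate, and the migration--fixation rate $\frac{N^2}{K}\lambda((\frac{\ell}{K},x^\ell),(\frac{\ell'}{K},x^{\ell'}))\,\alpha(\frac{\ell}{K},x^{\ell'},x^\ell)$, where $N^2$ counts the ordered source/target individual pairs and $1/K$ comes from the migration scaling. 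Since these rates are bounded (by Assumption \ref{ass:rates} and $\alpha\le 1$), the limit process $\nu^K$ is a well-defined, non-explosive pure-jump Markov process.

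To pass from one-step transitions to finite-dimensional distributions I would fix $0\le t_1<\cdots<t_n$ and iterate the previous analysis over the successive events. In the rescaled timescale the number of events in $[0,t_n]$ is stochastically bounded (Poisson-like with $O(1)$ intensity), so only finitely many competition phases occur and the accumulated error stays $O(\gamma)\to 0$. This also explains why the statement is phrased in the sense of finite-dimensional distributions rather than in the Skorokhod topology: during each competition phase --- vanishingly short in rescaled time but present --- the metapopulation is genuinely polymorphic and $\nu^{\gamma,K}$ stays far from every monomorphic $\nu^K$; at any fixed time $t_i/\gamma$, however, the probability of lying inside a competition phase is $O(\gamma)$, so with probability tending to one each $\nu^{\gamma,K}_{t_i/\gamma}$ is monomorphic and simply reads off the dominant traits $X^{\ell,K}_{t_i}$.

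The main obstacle will be to make the competition estimates uniform and to assemble them into the renewal/Markov iteration: one must show that the fixation probability of the freshly introduced trait is $\alpha+O(\gamma)$ despite possibly interfering events, and that, conditionally on the good event, the rescaled time to the first event converges to an exponential variable with the correct parameter. The genuinely delicate point is the coupling between patches through migrations occurring while some patch is polymorphic, which threatens the clean single-patch invasion picture at the level of the full $(\R^d)^K$-valued process; but because $K$ is finite this is again absorbed into the $O(\gamma)$ bound on interfering events.
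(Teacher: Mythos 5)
Your proposal follows essentially the same route as the paper, which does not write out a proof but explicitly defers to the timescale-separation arguments of Champagnat, Champagnat--Lambert and Lambert et al.\ that you reproduce: fast within-patch competition resolved by the gambler's-ruin fixation probability $\alpha$, an $O(\gamma)$ control of interfering mutation/migration events during a competition phase, and convergence only in the sense of finite-dimensional distributions because of the vanishingly short polymorphic phases. The one point to watch is that your ``absorbed in $O(1)$ time'' claim is uniform in $\gamma$ but not obviously uniform in the mutant trait $y$, since Assumption~\ref{ass:rates} bounds $c$ above but not below; almost-sure finiteness of the absorption time together with dominated convergence still gives the required $o(1)$ probability of an interfering event, at the cost of losing the explicit $O(\gamma)$ rate.
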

    \medskip
    
    \noindent 
    Observe how the fixation probability $\alpha(r, y,x)$ is integrated into the two jump rates, illustrating the long-term effect of selection on an individual bearing a new trait appearing in a patch—whether through mutation or migration. Specifically, the jump rates reveal that the fate of the new trait depends on its ability to invade: if it successfully invades, it replaces the current dominant trait, an event known as `fixation'; otherwise, it fails to establish and the mutation/migration event has no effect. These dynamics capture the selective pressures governing trait dominance within patches.\\
    The proof of this result is an adaptation of the one developed in Champagnat~\cite{champagnat2006microscopic}, Champagnat \& Lambert\,\cite[Theorem 3.1]{chamLamb} and  Lambert et al~\cite{vo2024}. We thus refer to these articles and do not detail the proof here.
    
    \begin{remark}
        In the timescale of mutations/migrations, the dynamics of the trait distribution across the metapopulation can be interpreted as a modified Moran model, where each individual corresponds to a site, and its trait corresponds to the dominant trait in this patch. 
       This new Moran model features two kinds of events, substitution events occurring within each site, corresponding to events of mutation followed by fixation of the mutant, known in adaptive dynamics as ``Trait Substitution Sequence'', and resampling events between sites, corresponding to events of migration followed by fixation of the migrant. Their jump kernels are the following:
        \begin{itemize}
            \item The substitution jump kernel in site $\ell$:
                        \begin{equation}
                N\,\theta(\frac{\ell}{K},x)\,\alpha(\frac{\ell}{K},y,x)\, Q(\frac{\ell}{K},x,dy)
            \end{equation}

            
            \item The resampling jump kernel between sites $\ell$ and $\ell'$:
            \begin{equation}
                \frac{N^2}{K}\,\lambda\left((\frac{\ell}{K},x),(\frac{\ell'}{K},y)\right)\,\alpha(\frac{\ell}{K},y,x).
            \end{equation}
        \end{itemize}
    \end{remark}

\section{Large number of patches and propagation of chaos}\label{sec:propchaos}
    We are now interested in studying the above limiting process, which corresponds to a collection of coupled Trait Substitution Sequences, under the assumption that the metapopulation is large, i.e., $K\to \infty$.  The central questions we aim to address are how phenotypic traits evolve within a single patch, or in a finite set of patches, and how they collectively behave at the level of the metapopulation. 
    
    We start from the process $\nu^K_t = \frac{1}{K}\sum_{\ell=1}^K\delta_{(\frac{\ell}{K},X^{\ell,K}_t)}$ obtained in Proposition\,\ref{prop:TSS} and we fix a finite number $J$ of patches, with labels denoted $\ell^K_1,\ldots,\ell^K_J\in\llbracket 1,K\rrbracket$.  Before going further, let us make the following 
    \begin{assume}\label{ass:prop-chaos}\phantom{f}
        \begin{itemize}
            \item $\{\nu^K_0,K\geq 1\}$ converges in law toward $\nu_0\in\iM_1(\iX)$ as $K\to+\infty$.
            
            \item For any $1\leq j\leq J$, $(\ell^K_j/K,X^{\ell^K_j,K}_0)\to (r^j,X^j_0)\in[0,1]\times\R^d$ as $K\to+\infty$.
            
            \item The functional rates $\theta(\cdot),\lambda(\cdot)\alpha(\cdot)$ and the mutation kernel $Q(\cdot)$ are continuous.
        \end{itemize}
    \end{assume}
    \noindent The first result below gives an answer to our main questions of the section in the general case. The following propositions will then refine and reinforce these results in the special cases of initial independence and spatial homogeneity. Let us start by stating the general result.
    \begin{theorem}\label{theo:large-metapop}
        Under Assumptions~\ref{ass:rates} and~\ref{ass:prop-chaos}, the sequence $\{ (X^{\ell^K_1,K},\ldots, X^{\ell^K_J,K},\nu^K), K\geq 1 \}$ converges in law in the Skorohod space $\D([0,T],(\R^d)^J\times\iM_1(\iX))$. Its limit process $(X^1,\ldots,X^J,\nu)$ is such that $\nu$ is the unique solution to the following weak equation defined, for any test function $\test\in\iC_b(\iX)$, by 
        \begin{equation}\label{eq:metapop-lim}
        \begin{aligned}
            \frac{d}{dt}\iint_{\iX}\test(r,x)\nu_t(dr,dx) & = N\iint_{\iX} \theta(r,x)\nu_t(dr,dx)\int_{\R^d}\alpha(r,y,x)Q(r,x,dy)\big[ \test(r,y) - \test(r,x) \big]
            \\
            &~ + N^2\iint_{\iX}\nu_t(dr,dx)\iint_{\iX}\nu_t(dr',dy)\lambda\big((r,x),(r',y)\big)\alpha(r,y,x)\big[ \test(r,y) - \test(r,x) \big]
        \end{aligned}
        \end{equation}
        and $(X^1,\ldots,X^J)$ is a $(\R^d)^J-$valued time-inhomogeneous pure-jump Markov process, where at time $t$, $x$  jumps to $\left( x^1,\ldots,x^{j-1},y,x^{j+1},\ldots,x^J \right)$ at rate 
        
        \begin{equation}\label{eq:metapop-lim2}
                N\,\theta(r^j,x^j)\,\alpha(r^j,y,x^j)\, Q(r^j,x^j,dy)
+N^2\int_0^1\lambda((r^j,x^j),(r',y))\,\alpha(r^j,y,x^j)\,
                \nu_{t-}(dr',dy).
        \end{equation}
       
    \end{theorem}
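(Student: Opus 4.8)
\emph{Overall strategy.} The plan is to follow the classical three-step programme for mean-field limits: (i) establish tightness of the laws of $(X^{\ell^K_1,K},\ldots,X^{\ell^K_J,K},\nu^K)$ in $\D([0,T],(\R^d)^J\times\iM_1(\iX))$; (ii) identify every limit point through the (joint) martingale problem, showing simultaneously that $\nu$ solves~\eqref{eq:metapop-lim} and that the tagged coordinates jump with the rates~\eqref{eq:metapop-lim2}; and (iii) prove that these limiting equations admit a unique solution, so that the tight sequence actually converges. Throughout I would exploit two structural facts. First, the spatial marginal of $\nu^K_t$ is frozen at $\frac1K\sum_\ell\delta_{\ell/K}$, since both mutation-fixation and migration-fixation preserve the location of each atom; hence $\nu_t$ has Lebesgue $r$-marginal at all times. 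Second, the total jump rate of the full $\nu^K$-dynamics is $O(K)$ while each jump moves a single atom by $1/K$.

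\emph{Step 1 (empirical measure).} Applying the generator to $\nu\mapsto\langle\nu,\test\rangle$ gives the Dynkin decomposition
\[
\langle\nu^K_t,\test\rangle=\langle\nu^K_0,\test\rangle+\int_0^t\Phi(\nu^K_s)(\test)\,ds+M^{K,\test}_t ,
\]
where $\Phi(\nu)(\test)$ is exactly the right-hand side of~\eqref{eq:metapop-lim}; the quadratic term appears because summing the migration-fixation rate $\tfrac{N^2}{K}\lambda\alpha$ over the $K$ source patches yields $N^2\iint_\iX\lambda((r,x),(r',y))\alpha(r,y,x)[\test(r,y)-\test(r,x)]\,\nu^K(dr',dy)$. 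As each jump has size $O(1/K)$ and total rate $O(K)$, the bracket $\langle M^{K,\test}\rangle_t$ is $O(1/K)$ and the martingale vanishes in $L^2$. Tightness of $\langle\nu^K_\cdot,\test\rangle$ follows from the Aldous--Rebolledo criterion (the finite-variation part is Lipschitz in $t$ with a constant depending only on $C$, $N$, $\|\test\|_\infty$), and tightness of $\nu^K$ in $\iM_1(\iX)$ from the Méléard--Roelly criterion together with a uniform moment bound on the trait marginal to secure compact containment. Passing to the limit and using continuity of the coefficients (Assumption~\ref{ass:prop-chaos}), every limit $\nu$ satisfies~\eqref{eq:metapop-lim}; since $|\Phi(\nu)(\test)|\le C'\|\test\|_\infty$, the map $t\mapsto\nu_t$ is weakly continuous, so $\nu_{t-}=\nu_t$.

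\emph{Step 2 (tagged patches and chaos).} The generator of $(X^{\ell^K_1,K},\ldots,X^{\ell^K_J,K})$ splits into a mutation part depending only on each patch's own trait and a migration part in which patch $j$ adopts a trait sampled against $\nu^K$, namely $N^2\iint_\iX\lambda((r^j,x^j),(r',y))\alpha(r^j,y,x^j)[\cdots]\,\nu^K(dr',dy)$, which is of McKean--Vlasov type. The contributions of the other tagged patches and the self-contribution of patch $j$ are each $O(1/K)$ (the self term vanishes identically, as its jump bracket is zero when $y=x^j$). Hence the direct interaction between any two tagged patches disappears, the coordinates decouple, and replacing $\nu^K$ by the deterministic $\nu$ yields precisely the rates~\eqref{eq:metapop-lim2}. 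This is the propagation of chaos: conditionally on the deterministic field $\nu$, the tagged patches evolve as independent time-inhomogeneous jump processes.

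\emph{Step 3 (uniqueness and conclusion).} It remains to show that~\eqref{eq:metapop-lim} has at most one solution. Using the frozen Lebesgue $r$-marginal, I would disintegrate $\nu_t(dr,dx)=dr\,\mu^r_t(dx)$ and bound the difference of two solutions in, say, the total-variation or bounded-Lipschitz distance: the mutation term is linear and the migration term bilinear in $\nu$, both with coefficients bounded by $C$ (Assumption~\ref{ass:rates}), so $\nu\mapsto\Phi(\nu)$ is Lipschitz and Grönwall's lemma gives uniqueness. With $\nu$ thus determined, the limiting rates~\eqref{eq:metapop-lim2} are bounded measurable functions of $(t,x^1,\ldots,x^J)$, so the associated time-inhomogeneous martingale problem for $(X^1,\ldots,X^J)$ is well posed; the joint limit law being unique, tightness upgrades to convergence in law of the full sequence. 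I expect the genuine difficulty to lie in this last step combined with the non-compactness of $\R^d$: controlling the bilinear migration term and securing the moment bounds needed for compact containment are exactly where the boundedness of the rates and the structure of the migration jumps (which copy existing traits rather than create new ones) must be used carefully.
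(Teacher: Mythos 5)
Your proposal follows the same programme as the paper's own (very brief) proof: tightness via the Aldous criterion, identification of limit points through the martingale problem in the style of Ethier--Kurtz and Fournier--M\'el\'eard, and a uniqueness argument (Gr\"onwall on the closed equation for $\nu$ plus well-posedness of the bounded-rate jump martingale problem for the tagged patches) to upgrade relative compactness to convergence. Your version simply fills in more of the standard details (the $O(1/K)$ martingale bracket, the vanishing of tagged-patch interactions, the compact-containment caveat), so it is essentially the same approach, correctly executed.
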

    \medskip
    
    \begin{proof}
The proof relies on standard techniques developed by Ethier \& Kurtz \cite{etk} and Fournier \& Méléard \cite{fourMel}. It goes through two steps. 
As a first step, the tightness of the sequence of trajectory laws is proved using Aldous criterion \cite{ald}. Tightness ensures the existence of limit points by Prokhorov Theorem. The second step uses a uniqueness argument to ensure that all limit distributions are identical and entails the convergence. 
    \end{proof}

    The limiting process aligns closely with the structured metapopulation models introduced by Gyllenberg et al~\cite{gyllenberg1997structured} and widely applied in spatial ecology, including works by Gyllenberg \& Metz~\cite{gyllenberg2001fitness} and more recently Lerch et al~\cite{lerch2023connecting}. These models operate on two levels: a local level, which may be stochastic and captures the dynamics within a single patch, allowing for finite local population sizes; and a metapopulation level, which is defined as the distribution of the local states, implicitly assuming an infinite number of patches. The limiting process described here fits entirely within this framework, and the result is thus establishing a connection between microscopic models and macroscopic structured metapopulation models.
    \medskip
    
    \noindent 
    Note that the measure-valued limit process $(\nu_t)_{t\geq 0}$ is fully characterized by Eq.\,\eqref{eq:metapop-lim} which is closed and that these dynamics are deterministic, so that the only randomness in $\nu_t$ comes from its starting point $\nu_0$. 
    Also notice that the traits $(X^1,\ldots,X^J)$ hosted at sites $(r^1,\ldots,r^J)$ are independent conditional on
the process $(\nu_t)_{t\geq 0}$ (they are assumed to have deterministic initial values).
    As a consequence, 
    if $\nu_0$ is not random, then $\nu_t$ is not random and  the trajectories $(X^1,\ldots,X^J)$  are independent. 
    We record this fact in the following statement.
    
    \begin{cor}[Propagation of chaos]\label{cor:prop-chaos}
        Assume that  
        $\nu_0$ is deterministic, then under the assumptions of Theorem\,\ref{theo:large-metapop}, the limit process $(X^1,\ldots,X^J,\nu)$ is such that $X^1,\ldots,X^J$ are independent and $\nu$ is deterministic.
    \end{cor}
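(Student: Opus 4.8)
The plan is to read off both conclusions from the characterisation of the limit $(X^1,\ldots,X^J,\nu)$ already provided by Theorem~\ref{theo:large-metapop}, treating the two assertions in two essentially independent steps and then combining them.

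\emph{Step 1: the field $\nu$ is deterministic.} By Theorem~\ref{theo:large-metapop}, the measure-valued component $\nu$ is the \emph{unique} solution of the weak equation \eqref{eq:metapop-lim} started from $\nu_0$. That equation is closed and deterministic: its right-hand side is an explicit functional of $\nu_t$ alone, with no external source of noise. Pathwise uniqueness therefore yields a well-defined solution map $\mu\mapsto(\Phi_t(\mu))_{t\in[0,T]}$ with $\nu_t=\Phi_t(\nu_0)$ almost surely, so that the only randomness carried by $(\nu_t)_{t\ge 0}$ is that of the initial datum $\nu_0$. When $\nu_0$ is deterministic, $\nu_t=\Phi_t(\nu_0)$ is deterministic for every $t$. (Measurability of $\Phi$, which would be needed to phrase this for a random $\nu_0$, follows from the same uniqueness together with the continuity of the rates assumed in Assumption~\ref{ass:prop-chaos}; for a deterministic $\nu_0$ it is not even required.)

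\emph{Step 2: conditionally on $\nu$, the $X^j$ are independent.} Theorem~\ref{theo:large-metapop} identifies $(X^1,\ldots,X^J)$, conditionally on the path $(\nu_t)_{t\ge0}$, as the time-inhomogeneous pure-jump Markov process whose generator at time $t$ acts on a bounded test function $f$ by
\[
\mathcal{A}_t f(x^1,\ldots,x^J)=\sum_{j=1}^J\int_{\R^d}\big[f(x^1,\ldots,y,\ldots,x^J)-f(x^1,\ldots,x^J)\big]\,\beta^j_t(x^j,dy),
\]
where $\beta^j_t(x^j,dy)$ is the jump kernel read off from \eqref{eq:metapop-lim2}. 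The decisive structural fact is that $\beta^j_t$ depends only on the $j$-th coordinate $x^j$, on the fixed site $r^j$, and on the field $\nu_{t-}$, but \emph{not} on the other coordinates $x^{j'}$, $j'\neq j$. Hence $\mathcal{A}_t$ is a sum of single-coordinate operators. Since the initial values $X^1_0,\ldots,X^J_0$ are deterministic by Assumption~\ref{ass:prop-chaos}, their joint law is a product of Dirac masses; the product of the marginal laws of the separately evolving coordinates then solves the martingale problem attached to $\mathcal{A}_t$, and uniqueness of that martingale problem — guaranteed because all rates are bounded under Assumption~\ref{ass:rates} (with $\alpha\le1$, $Q$ and $\nu_{t-}$ probability measures) — identifies it with the law of $(X^1,\ldots,X^J)$. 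Thus the coordinates are independent conditionally on $(\nu_t)_{t\ge0}$.

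\emph{Conclusion.} By Step~1 the conditioning variable $(\nu_t)_{t\ge0}$ is deterministic, so the conditional independence of Step~2 is plain (unconditional) independence, and $\nu$ is itself deterministic — exactly the two claims of the corollary. The one genuinely substantive point is the coordinate-wise decoupling of $\mathcal{A}_t$ used in Step~2: it records that in the mean-field limit $K\to\infty$ two tagged patches no longer exchange migrants directly but interact only through the common macroscopic field $\nu$, which is the mechanism of propagation of chaos. Everything else — the determinism of $\nu$ from uniqueness of the closed equation, and the upgrade from a sum-of-single-site generator to a product law via a martingale-problem uniqueness argument — is routine given the boundedness of the rates and the well-posedness already obtained in Theorem~\ref{theo:large-metapop}.
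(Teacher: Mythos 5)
Your proposal is correct and follows essentially the same route as the paper's own (much terser) argument: determinism of $\nu$ from the closed deterministic equation \eqref{eq:metapop-lim} with unique solution, and independence of the $X^j$ from the fact that each jump kernel in \eqref{eq:metapop-lim2} depends only on $x^j$, $r^j$ and $\nu_{t-}$, not on the other coordinates. The paper leaves the upgrade from the sum-of-single-coordinate generators to a product law implicit; your martingale-problem uniqueness step simply makes that explicit.
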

    
    The notion of propagation of chaos describes a phenomenon where, as the total number of particles in a system becomes large, any finite subset of particles become asymptotically independent, and their joint distribution converges to the product of identical marginal distributions governed by a limiting law. We will now see that in the case of a homogeneous structure, the result can be simplified and refined. Let us first state what we mean by homogeneity.
    
    \begin{defn}\label{def:homogeneous}
        The metapopulation is said to be \textsc{homogeneous} iff the initial conditions and functional parameters that govern local dynamics and migration do not depend on patch labels any longer.
    \end{defn}
    \noindent Under this condition, patches are exchangeable in the metapopulation and for the sake of simplicity we adopt a notation of the functional parameters where we merely drop the dependency upon the patch label. In this context, we have the following corollary.
    
    \begin{cor}\label{cor:McKean-Vlasov}
        Assume that the metapopulation is homogeneous and that $X^{1,K}_0,\ldots,X^{K,K}_0$ are i.i.d with common distribution $\mu_0\in\iM_1(\R^d)$. Then under the assumptions of Theorem\,\ref{theo:large-metapop}, the processes $X^1,\ldots,X^J$ are i.i.d copies of a $\R^d-$valued pure-jump time-inhomogeneous Markov process $X$ with law at time $t$ denoted $\mu_t$ and jump kernel at time $t$
        \begin{equation}
            N\theta(x)\alpha(y,x)Q(x,dy) + N^2\lambda(x,y)\alpha(y,x)\mu_{t-}(dy).
        \end{equation}
        In addition, $\mu_t$
        is continuous in time and the global trait distribution is given by
        %
        $\nu_t(dr,dx)=dr\mu_t(dx)$ on $\iX$. 
    \end{cor}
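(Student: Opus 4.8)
The plan is to read off the statement from Theorem~\ref{theo:large-metapop} and Corollary~\ref{cor:prop-chaos}, exploiting two structural features of the homogeneous i.i.d.\ regime: a law of large numbers for the initial empirical measure, and the fact that neither mutation--fixation nor migration--fixation jumps ever modify the spatial coordinate $r$. First I would verify that the hypotheses of Theorem~\ref{theo:large-metapop} can be met. Since the $X^{\ell,K}_0$ are i.i.d.\ with law $\mu_0$ and the sites $\ell/K$ equidistribute $[0,1]$, a first- and second-moment computation shows that for every $\test\in\iC_b(\iX)$ the variable $\frac1K\sum_{\ell=1}^K\test(\ell/K,X^{\ell,K}_0)$ has mean converging to $\iint_{\iX}\test(r,x)\,dr\,\mu_0(dx)$ and variance $O(1/K)$; hence $\nu^K_0\to\nu_0:=dr\,\mu_0(dx)$ in probability, so in law to a \emph{deterministic} limit. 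The tagged initial traits $X^{\ell^K_j,K}_0$ are random, however, so Assumption~\ref{ass:prop-chaos} (whose second bullet prescribes deterministic limits) does not apply verbatim. I would circumvent this by conditioning on $\{X^{\ell^K_j,K}_0=x^j:\ 1\le j\le J\}$: altering finitely many of the $K\to\infty$ patches perturbs $\nu^K_0$ by $O(J/K)$ and leaves its limit $dr\otimes\mu_0$ unchanged, while one may choose labels with $\ell^K_j/K\to r^j$. Conditionally, Assumption~\ref{ass:prop-chaos} then holds with deterministic data, so Theorem~\ref{theo:large-metapop} and Corollary~\ref{cor:prop-chaos} apply and yield a \emph{deterministic} $\nu$ together with conditionally independent tagged trajectories.

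Next I would identify the factorized form of $\nu$. Testing Eq.~\eqref{eq:metapop-lim} against functions of the trait alone, $\test(r,x)=\phi(x)$, the $r$- and $r'$-integrals factor out by homogeneity, so the trait marginal $\mu_t(dx):=\int_0^1\nu_t(dr,dx)$ satisfies the closed weak equation
\begin{equation*}
\frac{d}{dt}\int_{\R^d}\phi\,d\mu_t = N\int\theta(x)\Big(\int\alpha(y,x)Q(x,dy)\,[\phi(y)-\phi(x)]\Big)\mu_t(dx) + N^2\iint\lambda(x,y)\alpha(y,x)\,[\phi(y)-\phi(x)]\,\mu_t(dx)\,\mu_t(dy).
\end{equation*}
A guess-and-verify step then shows that $dr\otimes\mu_t$ solves the full Eq.~\eqref{eq:metapop-lim} for every $\test\in\iC_b(\iX)$: for each fixed $r$ this reduces to the closed equation applied to $\phi=\test(r,\cdot)$, which one integrates in $dr$ (the $r'$-integral contributing a factor $1$). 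Since $dr\otimes\mu_t$ and $\nu_t$ share the initial datum $dr\otimes\mu_0$, the uniqueness asserted in Theorem~\ref{theo:large-metapop} forces $\nu_t=dr\,\mu_t(dx)$. This is consistent with the spatial marginal being frozen, as jumps preserve $r$.

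Finally I would extract the McKean--Vlasov description. Substituting $\nu_{t-}=dr\otimes\mu_{t-}$ into the rate~\eqref{eq:metapop-lim2} collapses the $r'$-integral to $1$ and produces precisely the claimed kernel $N\theta(x)\alpha(y,x)Q(x,dy)+N^2\lambda(x,y)\alpha(y,x)\mu_{t-}(dy)$, identical for all tagged patches. For the self-consistency $\mathrm{Law}(X_t)=\mu_t$, I would note that the law $\rho_t$ of the jump process with this kernel and $X_0\sim\mu_0$ solves the forward equation, which---$\mu_t$ being an externally given flow---is the same linear evolution that $\mu_t$ satisfies with the same initial datum; uniqueness of this equation with bounded coefficients gives $\rho_t=\mu_t$. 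Weak continuity of $t\mapsto\mu_t$ (hence $\mu_{t-}=\mu_t$) follows from the total jump rate being bounded by $NC(1+N)$, which yields $|\int\phi\,d\mu_t-\int\phi\,d\mu_s|\le 2\|\phi\|_\infty(1-e^{-NC(1+N)|t-s|})$. Integrating the conditional statement over $(x^1,\dots,x^J)\sim\mu_0^{\otimes J}$ upgrades conditional independence with deterministic starts to the unconditional assertion that $X^1,\dots,X^J$ are i.i.d.\ copies of $X$.

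The step I expect to be the main obstacle is the first one: the mismatch between the random i.i.d.\ initial traits prescribed here and the deterministic initial data required by Assumption~\ref{ass:prop-chaos}. The conditioning argument resolves it, but its legitimacy rests on the law of large numbers pinning $\nu_0$ to the deterministic $dr\otimes\mu_0$ and on the insensitivity of this limit to finitely many tagged patches. The remaining ingredients---closure of the marginal equation, the product ansatz, and the forward-equation uniqueness behind self-consistency---are direct once the uniqueness furnished by Theorem~\ref{theo:large-metapop} is in hand; should well-posedness of the closed equation for $\mu_t$ be wanted as a standalone fact, it follows from a contraction in total-variation (or Wasserstein) distance using the bounded rates of Assumption~\ref{ass:rates}.
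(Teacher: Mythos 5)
Your proof is correct and follows essentially the same route as the paper's: factorize $\nu_t=dr\,\mu_t(dx)$ via the uniqueness in Theorem~\ref{theo:large-metapop}, read off the jump kernel from \eqref{eq:metapop-lim2}, and identify $\mu_t$ with the law of $X$ through uniqueness of the Kolmogorov forward equation (your $\rho_t$ is the paper's $\Lambda_t$). The one place you go beyond the paper is the conditioning argument reconciling the random i.i.d.\ initial traits with the deterministic initial data required by Assumption~\ref{ass:prop-chaos}; the paper simply invokes Corollary~\ref{cor:prop-chaos} without addressing this mismatch, so your extra step is a clarification of the same argument rather than a departure from it.
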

    \medskip
    
    The stochastic process $X$ introduced in this result is known as a McKean-Vlasov process~\cite{sznitman1991topics}: its jumps at any given time $t>0$ depend on the distribution of $X_{t-}$. In particular, it is easy to verify that if $\mu_0(dx) = g(0,x)dx$ and $Q(x,dy) = q(x,y)dy$, then $\mu_t(dx)$ admits a density $g(t,x)$ with respect to the Lebesgue measure. As a consequence, we obtain the following representation of the trajectories of $X$
    \begin{equation}
    \begin{aligned}
        dX_t & = \int_{\R_+\times\R_+\times\R^d} (y-X_{t-})\ind_{z\leq N\theta(X_{t-})\alpha(y,X_{t-})}\ind_{u\leq q(X_{t-},y)} \pi_1(du,dz,dy,dt)
        \\
        &~ + \int_{\R_+\times\R_+\times\R^d} (y-X_{t-})\ind_{z\leq N^2\lambda(X_{t-},y)\alpha(y,X_{t-})}\ind_{u\leq g(t,y)} \pi_2(du,dz,dy,dt)
    \end{aligned}
    \end{equation}
    where $\pi_1(du,dz,dy,dt), \pi_2(du,dz,dy,dt)$ are two independent Poisson point measures with common intensity the Lebesgue measure on $\R_+\times\R_+\times\R^d\times\R_+$. 
    \medskip
    

\section{Small mutations}\label{sec:smallmut}
   We are now interested in studying the process defined by \eqref{eq:metapop-lim} and \eqref{eq:metapop-lim2} in the limit of small mutation steps. 
   To this aim, we consider in all that follows that there exists a centered probability distribution $m(r,x,dh)$ such that
    \begin{equation}\label{eq:dec_Q}
        Q^\eps(r,x,dy) = (\tau_xm)(r,x,\frac{dy}{\eps}), \forall r\in[0,1],\forall x\in\R^d,
    \end{equation}
    where $\tau_x m$ is the shift map by $x$. 
    In other words, the law $m(r,x,dh)$ is the probability distribution of the scaled mutation step, so that $y\sim Q^\eps(r,x,dy)$ is equivalent to $y=x + \eps h$ where $h\sim m(r,x,dh)$.\\

    Section~\ref{sec:migr-small-mut} presents the convergence of the process \eqref{eq:metapop-lim} as $\eps$ goes to 0. It shows that, because significantly different trait values take too long to arise by mutation, the overall diversity remains constant over time. The dynamics of the frequencies of the initially present trait values are then analyzed, with further refinement in the case of a homogeneous population.
    
    Section~\ref{sec:canonic} explores what happens when time is accelerated enough to see new mutant traits emerge. To obtain convergence to a proper limiting process, one needs to rescale migration rates to slow migration before accelerating time. This framework is fully in alignment with the study of the canonical equation of adaptive dynamics, as explained below.

    \subsection{No time acceleration}\label{sec:migr-small-mut}
    Let us start with the study of the case where mutation effects vanish but time is not accelerated so as to see the emergence of significantly different mutant traits. This study is based on the simple hypothesis that the scaled mutation step has a bounded moment, that is:
    \begin{assume}\label{ass:mbound0}
        Eq.\,\eqref{eq:dec_Q} holds and there exists $\beta>0$ such that
        \begin{equation}
            \sup_{(r,x)\in[0,1]\times\R^d}\int_{\R^d}\|h\|^\beta m(r,x,dh) < +\infty .
        \end{equation}
    \end{assume}
    We also add some classical assumptions about the parameters:
    \begin{assume}\label{ass:mbound0param}
        We assume that 
        \begin{itemize}
        \item $\theta\in\iC_b(\iX)$, with $\iC_b(\iX)$ the set of continuous and bounded functions on $\iX$, 
        \item $\lambda\alpha\in\iC^{\beta}_b(\iX \times\iX)$, with $\iC^{\beta}_b(\iX \times\iX)$ the set of $\beta$-H\"older and bounded functions on $\iX \times\iX$.
        \end{itemize}
    \end{assume}
    Under Assumption \ref{ass:mbound0param}, we easily verify that the sequence of processes $(\nu^\eps)_{\eps>0}$, solutions of Eq.\,\eqref{eq:metapop-lim}, converges in $\iC([0,T],\iM_1(\iX))$ as $\eps\to 0$ towards a limit from which mutations are absent. To be more specific, we introduce the usual norm on the H\"older-Zygmund space $\iC^{\beta}_b(\iX)$ defined by
    \[ \|\test\|_{\iC^\beta_b(\iX)} = \|\test\|_\infty + \sup_{\substack{(r,x),(r',y)\in\iX \\ (r,x)\neq (r',y)}}\frac{|\test(r,x) - \test(r',y)|}{\left(|r-r'|+\|x-y\|\right)^\beta} \]
    which allows us to define the following distance between two processes with values in the space of measures $\iM_F(\iX)$:
    \begin{equation}
        d_{\beta}(\mu,\nu) := \sup_{0\leq t\leq T}\sup_{\substack{\test\in\iC^{\beta}_b(\iX) \\ \|\test\|_{\iC^{\beta}_b(\iX)}\leq 1}}\left\{\langle\mu_t,\test\rangle - \langle\nu_t,\test\rangle\right\}. 
    \end{equation}
    The following statement is proved in Section~\ref{sec:proofs-migr}.
   
    \begin{prop}\label{prop_freqmut}
        Assume that Assumptions~\ref{ass:mbound0} and~\ref{ass:mbound0param} hold. For all $\eps>0$, we define $(\nu^\eps_t)_{t\geq  0}$ as the solution to Eq.~\eqref{eq:metapop-lim}, with initial condition $\nu_0$. Then the sequence $(\nu_t^\eps,t\in [0,T])_{\eps>0}$ converges when $\eps$ tends to $0$ toward $\nu$ the solution to the equation
    \begin{equation}\label{eq:migration}
        \frac{d}{dt}\iint_{\iX}\test(r,x)\nu_t(dr,dx) = N^2\iint_\iX\nu_t(dr,dx)\iint_\iX\nu_t(dr',dy)\lambda((r,x),(r',y))\alpha(r,y,x)\big[ \test(r,y) - \test(r,x) \big],
    \end{equation}
    with  initial condition $\nu_0$. 
     The convergence holds in the sense that there exists $C_{\beta,T}>0$ which satisfies for all $\eps>0$,
    \begin{equation}
        d_{\beta}(\nu^\eps,\nu) \leq C_{\beta,T}\eps^\beta.
    \end{equation}
    \end{prop}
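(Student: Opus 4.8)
The plan is to subtract the two weak evolution equations and close a Gronwall estimate directly in the distance $d_\beta$. Since $\nu^\eps$ and $\nu$ share the initial condition $\nu_0$, for every $\test\in\iC^\beta_b(\iX)$ with $\|\test\|_{\iC^\beta_b(\iX)}\le 1$ I integrate in time the difference of \eqref{eq:metapop-lim} (with $Q=Q^\eps$) and \eqref{eq:migration} to obtain
\[
\langle\nu^\eps_t-\nu_t,\test\rangle=\int_0^t M^\eps_s(\test)\,ds+\int_0^t\big(G(\nu^\eps_s)-G(\nu_s)\big)\,ds,
\]
where $M^\eps_s(\test)$ is the mutation term evaluated along $\nu^\eps_s$ and $G(\mu)=N^2\iint_\iX\iint_\iX\lambda((r,x),(r',y))\alpha(r,y,x)[\test(r,y)-\test(r,x)]\,\mu(dr,dx)\mu(dr',dy)$ is the bilinear migration form (suppressing its dependence on $\test$). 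The two integrals are treated separately: the first is shown to be $O(\eps^\beta)$ uniformly, and the second is bounded by the running supremum of $|\langle\nu^\eps_s-\nu_s,\cdot\rangle|$.

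First I would control the mutation term. Writing $y=x+\eps h$ with $h\sim m(r,x,dh)$ and using $|\test(r,x+\eps h)-\test(r,x)|\le\|\test\|_{\iC^\beta_b(\iX)}\,\eps^\beta\|h\|^\beta$ together with $0\le\alpha\le1$, the boundedness of $\theta$, and the fact that $\nu^\eps_s$ is a probability measure, Assumption~\ref{ass:mbound0} gives
\[
|M^\eps_s(\test)|\le N\|\theta\|_\infty\,\eps^\beta\sup_{(r,x)\in\iX}\int_{\R^d}\|h\|^\beta m(r,x,dh)=:C_1\,\eps^\beta,
\]
uniformly in $s\le T$ and in $\test$ on the unit ball.

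The heart of the proof is the migration difference. Using the bilinear telescoping $\nu^\eps\otimes\nu^\eps-\nu\otimes\nu=(\nu^\eps-\nu)\otimes\nu^\eps+\nu\otimes(\nu^\eps-\nu)$, one writes $G(\nu^\eps_s)-G(\nu_s)=N^2\langle\nu^\eps_s-\nu_s,\Phi^\eps_{1,s}\rangle+N^2\langle\nu^\eps_s-\nu_s,\Phi_{2,s}\rangle$, where for instance
\[
\Phi^\eps_{1,s}(r,x)=\iint_\iX\lambda((r,x),(r',y))\,\alpha(r,y,x)\,[\test(r,y)-\test(r,x)]\,\nu^\eps_s(dr',dy).
\]
The key step is to verify that $\Phi^\eps_{1,s}$ and $\Phi_{2,s}$ lie in $\iC^\beta_b(\iX)$ with norms bounded by some $C_2$ independent of $\eps$, $s$, and $\test$ on the unit ball. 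Boundedness follows from $\|\lambda\alpha\|_\infty$ and $\|\test\|_\infty\le1$; for the Hölder seminorm one splits the increment of the integrand in $(r,x)$ into $\{[\lambda\alpha](r,x,\cdot)-[\lambda\alpha](\bar r,\bar x,\cdot)\}$ (controlled by the $\beta$-Hölder regularity of Assumption~\ref{ass:mbound0param}) times the bounded factor $\test(r,y)-\test(r,x)$, plus $[\lambda\alpha](\bar r,\bar x,\cdot)$ times $[\test(r,y)-\test(r,x)]-[\test(\bar r,y)-\test(\bar r,\bar x)]$, the latter being $\beta$-Hölder by the triangle inequality since $\|\test\|_{\iC^\beta_b(\iX)}\le1$. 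Hence each $\langle\nu^\eps_s-\nu_s,\Phi\rangle\le C_2\sup_{\|\psi\|_{\iC^\beta_b(\iX)}\le1}\langle\nu^\eps_s-\nu_s,\psi\rangle$.

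Setting $u(t)=\sup_{s\le t}\sup_{\|\psi\|_{\iC^\beta_b(\iX)}\le1}|\langle\nu^\eps_s-\nu_s,\psi\rangle|$ and combining the two estimates yields $u(t)\le C_1T\eps^\beta+2N^2C_2\int_0^t u(s)\,ds$, so Gronwall's lemma gives $u(T)\le C_{\beta,T}\eps^\beta$, which is exactly $d_\beta(\nu^\eps,\nu)\le C_{\beta,T}\eps^\beta$. The same bilinear Lipschitz estimate, applied to two solutions of \eqref{eq:migration}, also provides the well-posedness of the limit equation needed to make the comparison meaningful. I expect the main obstacle to be precisely the uniform $\iC^\beta_b(\iX)$-control of $\Phi^\eps_{1,s}$ and $\Phi_{2,s}$: one must check that integrating the jointly $\beta$-Hölder kernel $\lambda\alpha$ against a probability measure, after multiplication by the test-function increment, preserves the Hölder seminorm uniformly in $\eps$, $s$ and $\test$, so that these partial integrals are admissible dual test functions for $d_\beta$ and the Gronwall loop genuinely closes.
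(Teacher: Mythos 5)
Your proposal is correct and follows essentially the same route as the paper: bound the mutation term by $C\eps^\beta$ via the $\beta$-H\"older regularity of $\test$ and Assumption~\ref{ass:mbound0}, telescope the bilinear migration term as $(\nu^\eps-\nu)\otimes\nu^\eps+\nu\otimes(\nu^\eps-\nu)$, check that the resulting partial integrals are admissible $\iC^\beta_b(\iX)$ test functions (a step the paper only asserts, using $\lambda\alpha\in\iC^\beta_b(\iX\times\iX)$), and close with Gronwall in the dual norm $d_\beta$.
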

   \noindent Notice that the limiting dynamics are driven by events of migration and fixation, 
   to the exception of any event of mutation. 
   Hence, the diversity in the metapopulation can only remain constant or decrease.
   In particular, a metapopulation that is monomorphic from the start, i.e., $\nu_0(dr,dx) = \delta_{x^*}(dx)dr$ (with fixed $x^*\in\R^d$), remains constant, i.e., $\nu_t=\nu_0$ for all $t\geq 0$. 
    
    More interestingly, we can describe the dynamics of diversity when there are initially $n>1$ traits present in the metapopulation. 
    The following statement also is proved in Section~\ref{sec:proofs-migr}.
    \begin{prop}\label{prop_finitetype}
       Assume an initial condition of the form $\nu_0(dr,dx) = \sum_{i=1}^nw^i_0(r)\delta_{x^i}(dx)dr$ where the $w^i_0(r)\geq 0$ satisfy $\sum_{i=1}^n\int_0^1w^i_0(r)dr = 1$. The solution to \eqref{eq:migration} is then given by 
       $$\nu_t(dr,dx) = \sum_{i=1}^nw^i_t(r)\delta_{x^i}(dx)dr$$ 
       where for all $i=1,\ldots,n$ and $r\in [0,1]$, the functions $w^i_.(r)$ are non-negative, satisfy $\sum_{i=1}^n\int_0^1w^i_t(r)dr = 1$ for all $t\geq 0$ and satisfy the system of integro-differential equations
    \begin{equation}\label{eq:weights}
    \begin{aligned}
        \partial_tw^i_t(r) & = -N^2w^i_t(r)\sum_{j=1}^n\alpha(r,x^j,x^i)\int_0^1\lambda((r,x^i),(r',x^j))w^j_t(r')dr'
        \\
        &~ + N^2\sum_{j=1}^nw^j_t(r)\alpha(r,x^i,x^j)\left( \int_0^1\lambda((r,x^j),(r',x^i))w^i_t(r')dr' \right).
    \end{aligned}
    \end{equation} 
    \end{prop}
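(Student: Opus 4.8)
The plan is to exploit the fact that the limiting dynamics \eqref{eq:migration} contain no mutation term, so that mass can only be transported between the trait values $x^1,\ldots,x^n$ that are initially present and no new trait value can ever be created. Accordingly, I would first construct the weights $(w^1_t,\ldots,w^n_t)$ as the solution of the system \eqref{eq:weights}, establish their qualitative properties (non-negativity and conservation of total mass), then verify by direct substitution that $\nu_t(dr,dx)=\sum_{i=1}^n w^i_t(r)\delta_{x^i}(dx)\,dr$ solves \eqref{eq:migration}, and finally invoke uniqueness of solutions to \eqref{eq:migration} to conclude that this measure is indeed \emph{the} solution. Uniqueness for \eqref{eq:migration} is available by the same Gr\"onwall argument in the distance $d_\beta$ used in the proof of Proposition~\ref{prop_freqmut} (it is the mutation-free specialization of the uniqueness already used in Theorem~\ref{theo:large-metapop}).

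For the construction of the weights, I would read \eqref{eq:weights} as an ordinary differential equation $\partial_t w_t = F(w_t)$ for $w_t=(w^1_t,\ldots,w^n_t)$ in the Banach space $(L^1([0,1]))^n$. Under Assumption~\ref{ass:rates} the kernels $\lambda$ and $\alpha$ are bounded, so each nonlocal coefficient $r\mapsto\int_0^1\lambda((r,x^i),(r',x^j))w^j_t(r')\,dr'$ is controlled by $C\|w^j_t\|_{L^1}$. The vector field $F$ is therefore quadratic, hence locally Lipschitz on $(L^1([0,1]))^n$, and the Cauchy--Lipschitz theorem in Banach spaces yields a unique maximal solution; global existence will follow once the a priori mass bound of the next step is in hand. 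The time-continuity of $w^i_\cdot(r)$ claimed in the statement is then automatic.

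The qualitative properties are the substance of the proof. Mass conservation is immediate: choosing $\test\equiv 1$ in \eqref{eq:migration} makes the bracket $\test(r,y)-\test(r,x)$ vanish; equivalently, summing \eqref{eq:weights} over $i$, integrating in $r$ and relabelling $i\leftrightarrow j$ together with $r\leftrightarrow r'$ makes the two double integrals cancel, so that $\tfrac{d}{dt}\sum_{i=1}^n\int_0^1 w^i_t(r)\,dr=0$ and the total mass stays equal to $1$. Non-negativity is preserved thanks to the quasi-positive structure $\partial_t w^i_t(r)=-A^i_t(r)\,w^i_t(r)+B^i_t(r)$, with
\[
A^i_t(r)=N^2\sum_{j}\alpha(r,x^j,x^i)\int_0^1\lambda((r,x^i),(r',x^j))w^j_t(r')\,dr'\ge 0
\]
and $B^i_t(r)\ge 0$ whenever all components are non-negative. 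Writing the mild (integrating-factor) representation $w^i_t(r)=w^i_0(r)e^{-\int_0^tA^i_s(r)ds}+\int_0^t e^{-\int_s^tA^i_u(r)du}B^i_s(r)\,ds$ and running a positivity-preserving Picard iteration shows that the orbit stays in the non-negative cone. Combined with mass conservation this gives $\|w_t\|_{L^1}=1$ on the maximal interval, which precludes blow-up and upgrades the local solution to a global one.

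It remains to match the weight system with the measure equation. Substituting $\nu_t=\sum_i w^i_t(r)\delta_{x^i}(dx)\,dr$ into \eqref{eq:migration} and splitting $\test(r,x^j)-\test(r,x^i)$ into its two pieces, the contribution carrying $-\test(r,x^i)$ reproduces (after integrating out the dummy variable $r'$) the loss line of \eqref{eq:weights}, while the contribution carrying $+\test(r,x^j)$, after the relabelling $i\leftrightarrow j$ and a further integration in $r'$, reproduces the gain line; since $\test\in\iC_b(\iX)$ is arbitrary and the $x^i$ are distinct, matching the coefficient of each $\test(\cdot,x^i)$ shows that $\nu_t$ solves \eqref{eq:migration} exactly when the weights solve \eqref{eq:weights}. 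Uniqueness of the solution to \eqref{eq:migration} then forces this candidate to be the solution, proving the claim. I expect the only genuinely delicate point to be the preservation of non-negativity for this \emph{nonlocal}-in-$r$ system: the coupling through the spatial integrals rules out a purely pointwise comparison in $r$, and the integrating-factor representation combined with the invariance of the cone under the Picard map is the cleanest route I see to settle it.
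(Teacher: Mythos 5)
Your proof is correct, but it runs in the opposite direction from the paper's. You \emph{construct} a candidate solution: solve the nonlocal system \eqref{eq:weights} by Cauchy--Lipschitz in $(L^1([0,1]))^n$, prove positivity and mass conservation directly on that system (the integrating-factor/Picard argument for cone invariance is the standard way to handle the quasi-positive but nonlocal structure, and your worry about it is the right one but it does go through), verify by substitution that the resulting atomic measure solves \eqref{eq:migration}, and conclude by uniqueness. The paper instead \emph{analyzes} the solution $\nu_t$ of \eqref{eq:migration}, whose existence and uniqueness are already granted (it is \eqref{eq:metapop-lim} with $Q(r,x,dy)=\delta_x(dy)$): it first shows the support structure propagates --- testing against $\ind_{r\in A}$ for Lebesgue-null $A$ gives absolute continuity in $r$ via Radon--Nikodym, and testing against $\ind_{x\in B}$ for $B$ disjoint from $\{x^1,\ldots,x^n\}$ gives $\nu_t([0,1]\times B)\le N^2\|\lambda\alpha\|_\infty\int_0^t\nu_s([0,1]\times B)\,ds$, hence $\nu_t([0,1]\times B)=0$ by Gr\"onwall --- and then reads off \eqref{eq:weights} by testing against $\psi(r)\ind_{x=x^i}$. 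The paper's route is shorter because non-negativity and normalization of the weights come for free from $\nu_t$ being a probability measure, and no separate well-posedness theory for \eqref{eq:weights} is needed; your route buys a self-contained existence and positivity theory for the weight system itself and avoids the measure-theoretic support-propagation step, at the cost of the extra (and, as you note, slightly delicate) cone-invariance argument.
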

    \noindent
    These dynamics simplify even further when the metapopulation is homogeneous in the sense of Definition\,\ref{def:homogeneous}. 
    \begin{cor}\label{prop:homogeneous}
        Assume as in Proposition~\ref{prop_finitetype} that $\nu_0(dr,dx) = \sum_{i=1}^nw^i_0(r)\delta_{x^i}(dx)dr$ and define the \emph{mean weight} of trait $x^i$ at time $t$ as
    $$
    \overline{w}^i_t := \int_0^1w^i_t(r)dr.
    $$
      If the metapopulation is homogeneous, then the mean weights satisfy the following conservative replicator dynamics
        \begin{equation}\label{eq:syst-dyn}
            \frac{d\overline{w}^i_t}{dt} = \overline{w}^i_t\sum_{j=1}^na_{ij}\overline{w}^j_t \qquad t\ge0, i=1,\ldots,n,
        \end{equation}
        where $a_{ij}=G(x^i,x^j)$ with $G(y,x) = N^2\big(\lambda(x,y)\alpha(y,x) - \lambda(y,x)\alpha(x,y)\big)$ for any $x,y\in\R^d$. 
    \end{cor}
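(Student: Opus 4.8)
The plan is to derive the closed system~\eqref{eq:syst-dyn} directly from the weight equations~\eqref{eq:weights} of Proposition~\ref{prop_finitetype} by integrating over the patch space $[0,1]$, exploiting the fact that homogeneity removes all explicit dependence on the patch labels $r,r'$. First I would specialize~\eqref{eq:weights} to the homogeneous regime of Definition~\ref{def:homogeneous}, writing $\alpha(r,y,x)=\alpha(y,x)$ and $\lambda((r,x),(r',y))=\lambda(x,y)$. Since $\lambda(x^i,x^j)$ no longer depends on the integration variable $r'$, each inner integral factorizes as $\int_0^1\lambda(x^i,x^j)w^j_t(r')\,dr'=\lambda(x^i,x^j)\overline{w}^j_t$, and similarly for the second term. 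Equation~\eqref{eq:weights} thus reduces to
\begin{equation*}
\partial_t w^i_t(r) = -N^2 w^i_t(r)\sum_{j=1}^n \alpha(x^j,x^i)\lambda(x^i,x^j)\,\overline{w}^j_t + N^2\,\overline{w}^i_t\sum_{j=1}^n w^j_t(r)\,\alpha(x^i,x^j)\lambda(x^j,x^i),
\end{equation*}
in which the only remaining $r$-dependence on the right-hand side sits in the single factors $w^i_t(r)$ and $w^j_t(r)$, all other quantities being already integrated mean weights.

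Next I would integrate this identity over $r\in[0,1]$. The interchange of $\tfrac{d}{dt}$ and $\int_0^1\!dr$ is justified because, by Proposition~\ref{prop_finitetype}, the weights remain non-negative with total mass one while $\alpha$ and $\lambda$ are bounded (Assumption~\ref{ass:rates}), so the right-hand side of~\eqref{eq:weights} is bounded uniformly in $(r,t)$ and one may differentiate under the integral sign. Integration then pulls $\overline{w}^i_t$ out of the first term and $\overline{w}^j_t$ out of the second, yielding
\begin{equation*}
\frac{d\overline{w}^i_t}{dt} = N^2\,\overline{w}^i_t\sum_{j=1}^n\overline{w}^j_t\big[\lambda(x^j,x^i)\alpha(x^i,x^j)-\lambda(x^i,x^j)\alpha(x^j,x^i)\big].
\end{equation*}
Recognizing the bracket as $G(x^i,x^j)/N^2$, with $G(y,x)=N^2\big(\lambda(x,y)\alpha(y,x)-\lambda(y,x)\alpha(x,y)\big)$ evaluated at $y=x^i$, $x=x^j$, gives exactly~\eqref{eq:syst-dyn} with $a_{ij}=G(x^i,x^j)$.

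Finally I would record the consistency of this system with conservation of total mass, which justifies the term \emph{conservative}. The payoff matrix is antisymmetric, $a_{ij}=G(x^i,x^j)=-G(x^j,x^i)=-a_{ji}$, directly from the definition of $G$; hence $\sum_i \tfrac{d\overline{w}^i_t}{dt}=\sum_{i,j}a_{ij}\overline{w}^i_t\overline{w}^j_t=0$, so $\sum_i\overline{w}^i_t$ is preserved, in agreement with the constraint $\sum_i\overline{w}^i_t=1$ inherited from Proposition~\ref{prop_finitetype}. There is essentially no serious obstacle: the whole argument is an algebraic simplification produced by homogeneity followed by one integration, and the only point requiring a word of care is the differentiation under the integral sign, which is immediate from the uniform boundedness of the coefficients and of the weights.
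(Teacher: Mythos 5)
Your proof is correct and follows essentially the same route as the paper: the paper obtains \eqref{eq:syst-dyn} by taking $\psi\equiv 1$ in the weak formulation \eqref{eq:weak-weights}, which is precisely the integration over $r\in[0,1]$ that you perform on the strong form \eqref{eq:weights} (and which sidesteps the need to justify differentiating under the integral sign). The algebraic simplification under homogeneity and the identification of the bracket with $G(x^i,x^j)/N^2$ match the paper's computation exactly.
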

    \medskip
    Note that the matrix $A$ with generic element $a_{ij}$ is antisymmetric, because $G(y,x) = -G(x,y)$. The replicator dynamics \eqref{eq:syst-dyn} are also known as antisymmetric Lotka--Volterra equations and are the subject of a rich literature, see e.g. \cite{bomze1983lotka,PhysRevE.83.051108,PhysRevLett.110.168106,PhysRevE.98.062316}.
    
    In general, it is difficult to precisely determine the long-term behavior of this system~\eqref{eq:syst-dyn}, and in particular to determine if all traits coexist in steady state, as this is highly sensitive to the specific form of the interaction matrix $A$. To illustrate this dependency, we present particular cases, each leading to fundamentally different long-term dynamics. These examples highlight the critical role of $A$ in shaping the evolution of the system.\\

    \paragraph{Case 1. Competitive exclusion.} In this first case, we exhibit a general sufficient condition that implies the convergence of the system to a monomorphic state. More precisely, we can state the following corollary, which is proved in Section\,\ref{sec:proofs-migr}.
    
    \begin{cor}\label{cor:invasion}
       Under the assumptions of 
       Corollary \ref{prop:homogeneous}, if there exists an integer $1\leq i^\star\leq n$ such that 
       $\overline{w}^{i^\star}_0>0$ and
        \begin{equation}\label{eq:cond-invasion}
         a_{i^\star j}>0\qquad  j\neq i^\star,
        \end{equation}
        then the trait $x^{i^\star}$ invades the metapopulation 
        in the sense that $\overline{w}^{i^\star}_t\to 1$ and for all $j\neq i^\star$, $\overline{w}^{j}_t\to 0$ as $t\to \infty$.
    \end{cor}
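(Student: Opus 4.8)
The plan is to exploit the antisymmetry of $A$ together with the sign condition on the row $i^\star$ to show that $\overline{w}^{i^\star}$ behaves as a Lyapunov coordinate that increases monotonically to $1$, dragging all other weights to $0$. First I would record the basic invariances of the replicator system \eqref{eq:syst-dyn}. Writing the ODE in multiplicative form $\overline{w}^i_t = \overline{w}^i_0\exp\big(\int_0^t\sum_j a_{ij}\overline{w}^j_s\,ds\big)$ shows that the faces $\{\overline{w}^i=0\}$ are invariant and that any coordinate positive at time $0$ stays positive; in particular $\overline{w}^{i^\star}_t>0$ for all $t$ since $\overline{w}^{i^\star}_0>0$. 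Moreover the antisymmetry of $A$ gives $\frac{d}{dt}\sum_i\overline{w}^i_t=\sum_{i,j}a_{ij}\overline{w}^i_t\overline{w}^j_t=0$, so the total mass remains equal to $1$ and the trajectory stays in the simplex; in particular every coordinate stays in $[0,1]$. These facts let me treat $1-\overline{w}^{i^\star}_t=\sum_{j\neq i^\star}\overline{w}^j_t$ as a genuine nonnegative quantity throughout.

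Next I would derive the key monotonicity and differential inequality. Since $a_{i^\star i^\star}=0$, the equation for $i^\star$ reads $\frac{d}{dt}\overline{w}^{i^\star}_t=\overline{w}^{i^\star}_t\sum_{j\neq i^\star}a_{i^\star j}\overline{w}^j_t$, and every term of the sum is nonnegative by \eqref{eq:cond-invasion} and the nonnegativity of the weights; hence $\overline{w}^{i^\star}_t$ is nondecreasing and, being bounded by $1$, converges to some $\overline{w}^{i^\star}_\infty\in(0,1]$. Setting $a:=\min_{j\neq i^\star}a_{i^\star j}>0$ and dividing by $\overline{w}^{i^\star}_t>0$, I obtain
\[
\frac{d}{dt}\log\overline{w}^{i^\star}_t \;=\; \sum_{j\neq i^\star}a_{i^\star j}\overline{w}^j_t \;\geq\; a\sum_{j\neq i^\star}\overline{w}^j_t \;=\; a\,\big(1-\overline{w}^{i^\star}_t\big).
\]

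Finally I would integrate and conclude. Integrating from $0$ to $t$ and using $\log\overline{w}^{i^\star}_t\leq 0$ gives $a\int_0^t(1-\overline{w}^{i^\star}_s)\,ds\leq-\log\overline{w}^{i^\star}_0<\infty$ for every $t$, so the nonnegative integrand is integrable on $[0,\infty)$. Because $\overline{w}^{i^\star}_t$ already converges, $1-\overline{w}^{i^\star}_t$ converges as well, and an integrable convergent function must have limit $0$; hence $\overline{w}^{i^\star}_t\to 1$. Then $\sum_{j\neq i^\star}\overline{w}^j_t=1-\overline{w}^{i^\star}_t\to 0$, and since each $\overline{w}^j_t\geq 0$, every $\overline{w}^j_t\to 0$ for $j\neq i^\star$, which is the claim.

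The proof is short and involves no delicate estimate; the points requiring care are exactly those that make the comparisons legitimate. I need the trajectory to remain in the simplex (so the weights lie in $[0,1]$ and $1-\overline{w}^{i^\star}=\sum_{j\neq i^\star}\overline{w}^j$) and I need $\overline{w}^{i^\star}$ to stay bounded away from $0$ (so that $\log\overline{w}^{i^\star}$ is well defined and $-\log\overline{w}^{i^\star}_0$ is a finite bound). The conceptual step that makes everything work is recognizing $-\log\overline{w}^{i^\star}$ as a Lyapunov functional: the same computation shows it is nonincreasing and strictly decreasing away from the vertex $e_{i^\star}$, so one could alternatively invoke LaSalle's invariance principle, but the direct integral argument above makes the convergence rate transparent and avoids it.
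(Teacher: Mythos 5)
Your proof is correct and follows essentially the same route as the paper: the paper works with $S_t=1-\overline{w}^{i^\star}_t$, shows it is decreasing via the same sign argument on the row $a_{i^\star j}$, and identifies the limit $S_\infty=0$, which is exactly your monotonicity-plus-integrability argument in different clothing. If anything, your integration of $\frac{d}{dt}\log\overline{w}^{i^\star}_t\geq a(1-\overline{w}^{i^\star}_t)$ makes explicit the step the paper leaves terse (why the limit of the decreasing function $S_t$ must force $\sum_{j\neq i^\star}a_{i^\star j}\overline{w}^j_\infty=0$), so no changes are needed.
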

The following example illustrates this corollary.
     \begin{example}
        Consider an example with traits in $[0,1]$ such that $a_{ij}=x^j - x^i$. This occurs for example if 
            $\lambda(x,y)\alpha(y,x) = x(1+y)/N^2$, so that 
        \[
        G(y,x) = x(1+y) - y(1+x) = x - y, \qquad x,y\in[0,1].
        \]
        As a consequence and according to Corollary\,\ref{cor:invasion}, the smallest trait will invade the metapopulation. Figure 1 shows typical simulations describing the dynamical system \eqref{eq:syst-dyn} in dimension $n=3,5$.
        \end{example}
    
        \begin{figure}[h]
        \begin{center}
        \begin{tabular}{cc}
    \includegraphics[width=0.47\linewidth]{./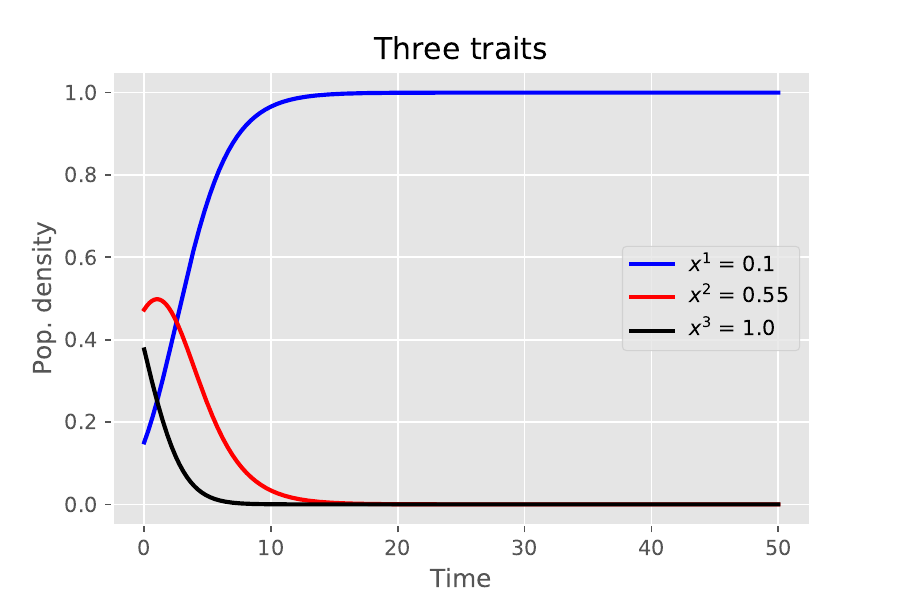} & \includegraphics[width=0.47\linewidth]{./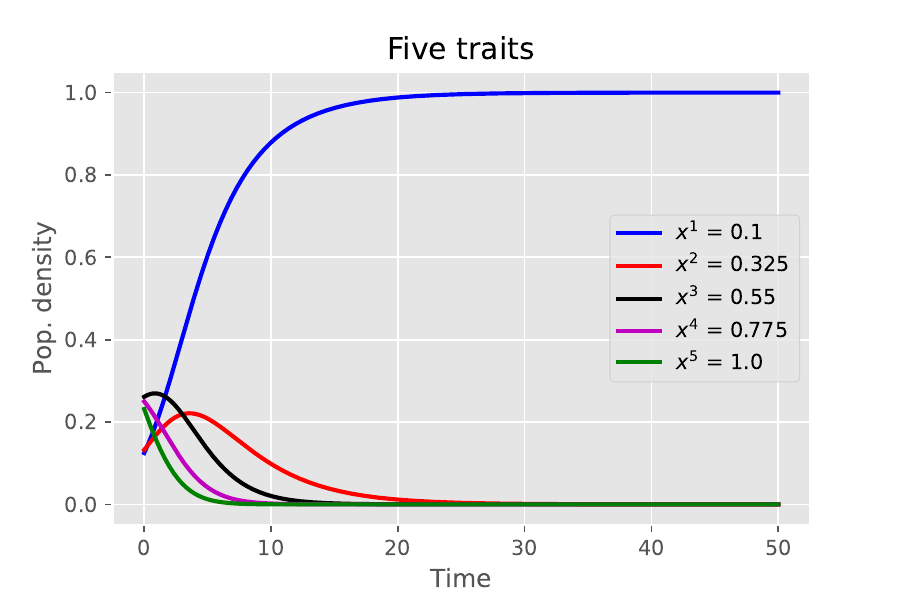} \\
            (a) & (b)
        \end{tabular}
        \caption{Invasion of the smallest trait: (a) Three distinct initial trait values; 
        (b) Five distinct initial trait values. 
        }
        \end{center}
        \end{figure}
    \paragraph{Case 2. Cycling dynamics.} When there is no $i^\star$ that satisfies Eq.\,\eqref{eq:cond-invasion} of the previous corollary, then the model is more difficult to study 
   and may often display cycling dynamics, 
as in the following example. 
    \begin{example}
        In this example, we consider traits in $[0,1]$ again and choose a migration-fixation rate given by $\lambda(x,y)\alpha(y,x) = \big(1+\sin(2\pi(x-y))\big)/N^2$. Then the fitness function satisfies
        \[ G(y,x) = 2\sin(2\pi(x-y)), \qquad x,y\in [0,1]. \]
        Note that the periodic behavior of solutions is not due to the periodic nature of $G$, which is only evaluated at the possible pairs of $n=$3 or 5 distinct trait values.
         \end{example}
   
        \begin{figure}[h]
        \begin{center}
        \begin{tabular}{cc}
    \includegraphics[width=0.47\linewidth]{./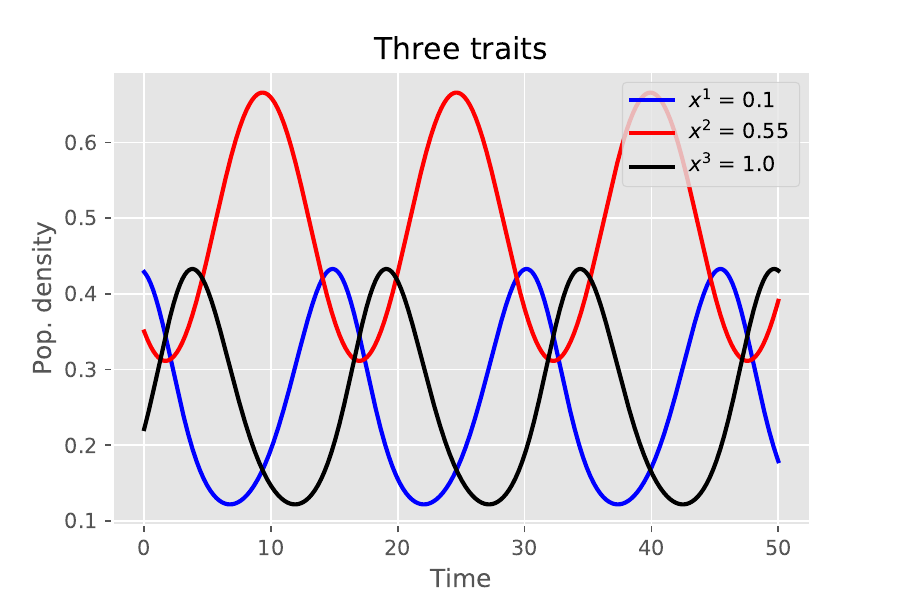} & \includegraphics[width=0.47\linewidth]{./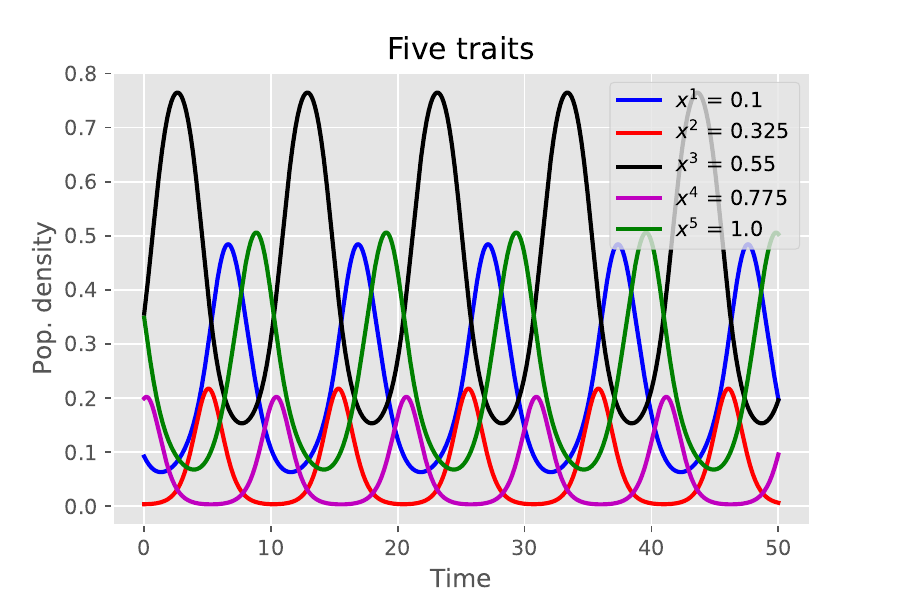} \\
            (a) & (b)
        \end{tabular}
        \caption{Cyclic trajectories: (a) Three distinct initial trait values; 
        (b) Five distinct initial trait values
        .}
        \end{center}
        \end{figure}

    \subsection{Accelerating time, slowing down migration and the canonical diffusion with jumps}\label{sec:canonic}
    In this section, we 
     assume as before that the mutation steps are given by \eqref{eq:dec_Q}, we additionally assume that the rate of migration is $\eps^2\lambda(\cdot)$ instead of $\lambda(\cdot)$ and we accelerate time in order to see the effect of mutations and of migrations, whose rates are tuned so as to become significant on the same timescale. As in the previous section, we study the limit as $\eps$ goes to 0 of the process defined by \eqref{eq:metapop-lim} and \eqref{eq:metapop-lim2}. 
     This approach allows us to derive a canonical equation, as presented in the introduction, that captures the dynamics of the dominant trait within a single patch. 
    
    To this aim, let us first introduce some notation and functions that play an important role in the dynamics.
    \begin{itemize}
        %
        \item The covariance matrix of the rescaled mutation steps for an individual with trait $x\in\R^d$ located at position $r\in[0,1]$ is $$
        \Sigma(r,x) := \left( \int_{\R^d}h_ih_jm(r,x,dh) \right)_{1\leq i,j\leq d},
        $$ 
       which is a 
       non-negative 
       symmetric matrix. For any $(r,x)\in[0,1]\times\R^d$, we denote by $\sigma(r,x)$ the non-negative $d-$dimensional matrix such that $\Sigma(r,x) = \sigma(r,x)(\sigma(r,x))^T$.
        
        \item We denote by $\Fit(r,y,x) = \log\left(\frac{c(r,x,y)}{c(r,y,x)}\right)$ the relative fitness of trait $y\in\R^d$ compared to trait $x\in\R^d$ in a population located at position $r\in[0,1]$.
        \item Let $k\ge 2$ be an integer and let $f$ be a differentiable real function of $k$ variables. If the $i$-th variable of $f$ is a $d$-dimensional vector, then we denote by $\nabla_i f$ the $d$-dimensional gradient of $f$ with respect to its $i$-th component. 
        \item Finally, we will denote by $(B_t)_{t\geq 0}$ the $d-$dimensional Brownian motion.
    \end{itemize}
    We make the following assumptions:
    \begin{assume}\label{ass:mbound}
        Eq.\,\eqref{eq:dec_Q} holds and there exists $\beta>0$ such that
        \begin{equation}
            \sup_{(r,x)\in[0,1]\times\R^d}\int_{\R^d}\|h\|^{2+\beta}m(r,x,dh) < +\infty .
        \end{equation}
    \end{assume}
    \noindent We also assume the following technical hypothesis on the parameters of the model
    \begin{assume}\label{ass:Xuniq}~
        \begin{itemize}
            \item The functional rates $\lambda(\cdot),\theta(\cdot), c(\cdot)$ are continuous and bounded.
            
            \item $c\in\iC^{0,1,1}_b([0,1]\times\R^d\times\R^d)$ such that $\theta(\cdot)\Sigma(\cdot)\cdot\nabla_2\Fit(\cdot)$ and $\sqrt{\theta(\cdot)}\sigma(\cdot)$ are Lipschitz continuous according to variables $x,y$ uniformly in $r\in[0,1]$.
        \end{itemize}
    \end{assume}
    \noindent We are able to prove the following result which describes the convergence of the time-scaled process defined by~\eqref{eq:metapop-lim} and \eqref{eq:metapop-lim2} for $J=1$. In other words, it describes the dynamics of the trait present at any given location. More specifically, let us  fix $z\in[0,1]$, then for any $\eps>0$, we define $(X^\eps,\nu^\eps)$ such that $\nu^\eps$ is the unique solution to the weak equation defined, for any test function $\test\in\iC_b(\iX)$, by 
        \begin{equation}\label{eq:metapop-limeps}
        \begin{aligned}
            \frac{d}{dt}\iint_{\iX}\test(r,x)\nu^\eps_t(dr,dx) & = N\iint_{\iX} \theta(r,x)\nu^\eps_t(dr,dx)\int_{\R^d}\alpha(r,y,x)Q^\eps(r,x,dy)\big[ \test(r,y) - \test(r,x) \big]
            \\
            &~ + N^2\iint_{\iX}\nu^\eps_t(dr,dx)\iint_{\iX}\eps^2\nu^\eps_t(dr',dy)\lambda\big((r,x),(r',y)\big)\alpha(r,y,x)\big[ \test(r,y) - \test(r,x) \big]
        \end{aligned}
        \end{equation}
        and $(X^\eps)$ is the time-inhomogeneous pure-jump Markov processes that jumps from $x$ to $y$ at infinitesimal rate 
        \begin{equation}\label{eq:metapoplim2eps}
             N\theta(z,x)\alpha(z,y,x) Q^\eps(z,x,dy)
                + N^2\eps^2\int_0^1\lambda((z,x),(r',y))\alpha(r,y,x) \nu^\eps_t(dr',dy).
        \end{equation}
        The next statement is proved in Section~\ref{sec:proofcanonique}.
    \begin{theorem}\label{theo:canoniq}
        Let $(X^\eps_t,\nu^\eps_t)_{t\geq 0}$ be the process defined by~\eqref{eq:metapop-limeps} and~\eqref{eq:metapoplim2eps} such that the sequence $\left\{ (X^\eps_0,\nu^\eps_0), \eps>0 \right\}$ converges in law towards $(X_0,\xi_0)$. Then under Assumptions \ref{ass:mbound}-\ref{ass:Xuniq}, the sequence of processes $\{(X^\eps_{\cdot/\eps^2},\nu^\eps_{\cdot/\eps^2}), \eps>0\}$ converges in law as $\eps\to 0$ in the Skorohod space $\D([0,T],\R^d\times\iM_F(\iX))$. The limit process $(X,\xi)$ is a jump-diffusion process with initial condition $(X_0,\xi_0)$. It is solution to 
\begin{equation}\label{eq:cont-canoniq}
            dX_t = \frac{N-1}{2}\theta(z,X_t)\Sigma(z,X_t)\cdot\nabla_2\Fit(z,X_t,X_t)dt + \sqrt{\theta(z,X_t)}\sigma(z,X_t)\cdot dB_t
        \end{equation}
        with jump kernel  
        \begin{equation}
            N^2\int_0^1\lambda((z,x),(r,y))\alpha(z,y,x)\xi_{t-}(dr,dy),
        \end{equation}
        where for any test function $\test\in\iC^2_b(\iX)$,
        \begin{equation}\label{eq:metapop-canoniq}
        \begin{aligned}
            \frac{d}{dt}\iint_{\iX}\test(r,x)\xi_t(dr,dx)  = N^2\iint_{\iX}\xi_t(dr,dx)\iint_{\iX}\xi_t(dr',dy)\lambda((r,x),(r',y))\alpha(r,y,x)\big[ \test(r,y) - \test(r,x) \big]\\
            +\iint_{\iX} \left\{\frac{N-1}{2}\theta(r,x)\left( \Sigma(r,x)\cdot\nabla_2\Fit(r,x,x) \right)\cdot \nabla_x\test(r,x) + \frac{\theta(r,x)}{2}\sum_{ij}\Sigma_{ij}(r,x)\partial^2_{x_ix_j}\test(r,x) \right\}\xi_t(dr,dx).
        \end{aligned}
        \end{equation}

       \end{theorem}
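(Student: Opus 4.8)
The proof follows the generator and martingale-problem methodology of Ethier \& Kurtz~\cite{etk}, together with the tightness estimates of Fournier \& M\'el\'eard~\cite{fourMel}, exactly in the spirit of the proof of Theorem~\ref{theo:large-metapop}. The plan is to work with the joint process $(X^\eps_{\cdot/\eps^2},\nu^\eps_{\cdot/\eps^2})$ and to show that, after the time acceleration by $1/\eps^2$, its generator $\tfrac1{\eps^2}\mathcal{G}^\eps$ converges to the generator $\mathcal{G}$ of the announced jump--diffusion $(X,\xi)$. A useful structural remark is that the measure equation~\eqref{eq:metapop-limeps} is autonomous: $\nu^\eps$ is a deterministic functional of $\nu^\eps_0$, and $X^\eps$ is then a time-inhomogeneous Markov process driven by this flow (random only through its initial datum). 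I would therefore treat the pair jointly, applying $\tfrac1{\eps^2}\mathcal{G}^\eps$ to product test functions $\Phi(x,\nu)=\test(x)\,F(\langle\nu,\psi\rangle)$ with $\test\in\iC^2_b(\R^d)$, $\psi\in\iC^2_b(\iX)$ and $F$ smooth, and passing to the limit term by term.

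The heart of the argument is the Taylor expansion, in powers of $\eps$, of the mutation--fixation part of the accelerated generator at the fixed location $z$, namely $\tfrac{N}{\eps^2}\theta(z,x)\int_{\R^d}\alpha(z,x+\eps h,x)\,[\test(x+\eps h)-\test(x)]\,m(z,x,dh)$. Using $\test\in\iC^2_b$, the increment expands as $\test(x+\eps h)-\test(x)=\eps\,\nabla\test(x)\cdot h+\tfrac{\eps^2}2\,h^\top\nabla^2\test(x)\,h+o(\eps^2)$. Simultaneously, since $\Fit(z,x,x)=0$ and $c\in\iC^{0,1,1}_b$, the fixation probability expands around the neutral value $\alpha(z,x,x)=1/N$ as
\begin{equation*}
\alpha(z,x+\eps h,x)=\frac1N+\frac{N-1}{2N}\,\eps\,\nabla_2\Fit(z,x,x)\cdot h+o(\eps),
\end{equation*}
the coefficient $\tfrac{N-1}{2N}$ arising from differentiating $u\mapsto(\sum_{k=0}^{N-1}u^k)^{-1}$ at the neutral ratio $u=c(z,y,x)/c(z,x,y)=1$. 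Multiplying the two expansions, dividing by $\eps^2$ and integrating against the \emph{centered} measure $m(z,x,dh)$, the $O(1/\eps)$ contribution $\tfrac1{\eps N}\nabla\test(x)\cdot\int h\,m(z,x,dh)$ vanishes identically, while the surviving $O(1)$ term is
\begin{equation*}
\theta(z,x)\Big[\tfrac{N-1}{2}\big(\Sigma(z,x)\,\nabla_2\Fit(z,x,x)\big)\cdot\nabla\test(x)+\tfrac12\textstyle\sum_{i,j}\Sigma_{ij}(z,x)\,\partial^2_{x_ix_j}\test(x)\Big],
\end{equation*}
which is precisely the generator of the diffusion~\eqref{eq:cont-canoniq} with drift $\tfrac{N-1}2\theta\Sigma\nabla_2\Fit$ and diffusion matrix $\theta\Sigma=\theta\sigma\sigma^\top$; note that the advection (selection) term emerges from the cross term, i.e.\ the correlation under $m$ between the fitness gradient and the mutation step. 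The migration term already carries the factor $\eps^2$, so it is left invariant by the acceleration and, once $\nu^\eps\rightharpoonup\xi$, converges to the announced jump kernel; the same computation integrated in $(r,x)$ against $\nu^\eps$ produces the drift/diffusion part of~\eqref{eq:metapop-canoniq}.

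The remaining work is the standard two-part scheme. First, tightness: I would establish uniform-in-$\eps$ moment bounds (conservation of total mass keeps $\nu^\eps_t$ a probability measure, and Assumption~\ref{ass:mbound} together with boundedness of the coefficients controls $\sup_\eps\E[\sup_{t\le T}\|X^\eps_{t/\eps^2}\|^2]$ and the second $\R^d$-moment of $\nu^\eps_{t/\eps^2}$ on $[0,T]$), and then invoke the Aldous--Rebolledo criterion~\cite{ald}, the crucial point being that $\tfrac1{\eps^2}\mathcal{G}^\eps\Phi$ is bounded uniformly in $\eps$ because the $\eps^{-2}$ is exactly absorbed by the quadratic vanishing of the mutation increments. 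Prokhorov's theorem then yields limit points, each identified as a solution of the martingale problem for $\mathcal{G}$ via the generator convergence above. Second, uniqueness: the closed equation~\eqref{eq:metapop-canoniq} for $\xi$ admits a unique solution by a Gronwall/duality estimate in the distance $d_\beta$, as in Proposition~\ref{prop_freqmut}, so $\xi$ is determined by $\xi_0$; given this deterministic flow, \eqref{eq:cont-canoniq} is a genuine (non-McKean) time-inhomogeneous SDE whose coefficients $\theta\Sigma\nabla_2\Fit$ and $\sqrt\theta\sigma$ are Lipschitz by Assumption~\ref{ass:Xuniq} and whose jump rate is bounded (finite activity, with $\xi_{t-}$ a probability measure), so pathwise uniqueness holds. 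Identity of the laws of all limit points then upgrades tightness to the claimed convergence in law.

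I expect the main obstacle to be the uniform control of the Taylor remainders after division by $\eps^2$: one must show that the $o(\eps^2)$ terms of the product expansion—those involving the modulus of continuity of $\nabla^2\test$ and the first-order remainder of $\alpha$—integrate against $m(z,x,dh)$ to a quantity that is $o(1)$ uniformly in $(z,x)$. This is precisely what the $(2+\beta)$-moment bound of Assumption~\ref{ass:mbound} and the $\iC^{0,1,1}_b$ and Lipschitz regularity of Assumption~\ref{ass:Xuniq} are designed to furnish, yielding an $O(\eps^\beta)$ rate. A secondary difficulty is handling the measure-valued component jointly with $X^\eps$ in the martingale problem, since the jump rate of $X^\eps$ depends on $\nu^\eps_{t-}$; this is resolved by the autonomy of~\eqref{eq:metapop-limeps}, which allows the uniqueness of $\xi$ to be settled first and then fed into the SDE for $X$.
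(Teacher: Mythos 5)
Your proposal follows essentially the same route as the paper's proof: time-change $\xi^\eps_t=\nu^\eps_{t/\eps^2}$, $Y^\eps=X^\eps_{\cdot/\eps^2}$, Taylor expansion of the mutation--fixation generator in which the centered kernel $m$ kills the $O(1/\eps)$ term, the cross term between the first-order expansion of $\alpha$ (via $\nabla_2\alpha(r,x,x)=\tfrac{N-1}{2N}\nabla_2\Fit(r,x,x)$) and of $\test$ produces the advection, the neutral term $\alpha(r,x,x)=1/N$ produces the diffusion, followed by Aldous/Prokhorov tightness, identification of limit points through the martingale problem, and uniqueness of $\xi$ first and then of the jump--diffusion $X$. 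The only cosmetic difference is in the uniqueness step for $\xi$: the paper tests against the semigroup of the SDE \eqref{eq:cont-canoniq} to get a mild equation and applies Gronwall in total variation (rather than a direct $d_\beta$ estimate, which the second-order terms in \eqref{eq:metapop-canoniq} would obstruct), but this is exactly the ``duality'' you allude to.
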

    \medskip 
    
    \noindent 
    Notice that the limit we identified corresponds to what is commonly referred to as a canonical equation (see, for example, \cite{dieckmann1996dynamical,champagnat2006unifying,chamLamb}). In our case, this limit is novel for at least two reasons: (1) it is stochastic and includes a diffusion term, which, to the best of our knowledge, has only been observed in \cite{chamLamb,vo2024}; and (2) it incorporates jumps arising from migration-fixation events. 
    
    \begin{cor}\label{cor:McKean-Vlasov2}
       Assume that the metapopulation is homogeneous and that the initial condition $(X_0,\xi_0)$ is such that $\xi_0(dr,dx) = \mu_0(dx)dr$ where $\mu_0\in\iM_1(\R^d)$ is the law of $X_0$.  Then under the assumptions of Theorem~\ref{theo:canoniq},
        the process $X$ is a diffusion process with time-inhomogeneous jumps, with law at time $t$ denoted $\mu_t$, solution to
        \begin{equation}
            dX_t = \frac{N-1}{2}\theta(X_t)\Sigma(X_t)\cdot\nabla_1\Fit(X_t,X_t)dt + \sqrt{\theta(X_t)}\sigma(X_t)\cdot dB_t
        \end{equation}
        with jump kernel at time $t$
    $$N^2\lambda(x,y)\alpha(y,x)\mu_{t-}(dy).$$
\end{cor}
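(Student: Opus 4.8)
The plan is to treat this as the jump--diffusion analogue of Corollary~\ref{cor:McKean-Vlasov}: under homogeneity with product initial data, I expect the deterministic measure $\xi$ to retain the product structure $\xi_t(dr,dx)=\mu_t(dx)\,dr$, and the single-site process $X$ to become a genuine McKean--Vlasov process whose jumps are driven by its own law. I would first exhibit such a product-form solution of the closed equation~\eqref{eq:metapop-canoniq}, then use the uniqueness underlying Theorem~\ref{theo:canoniq} to conclude it is \emph{the} solution, and finally verify the self-consistency that $\mu_t$ is indeed the law of $X_t$. Note that, as in the remark following Theorem~\ref{theo:large-metapop}, equation~\eqref{eq:metapop-canoniq} is closed and deterministic in $\xi_t$, and here $\xi_0=\mu_0\otimes dr$ is deterministic.

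First I would let $\mu_t$ denote the solution of the nonlinear equation obtained by testing~\eqref{eq:metapop-canoniq} against functions $\test(r,x)=\psi(x)$ independent of the patch label. Since homogeneity (Definition~\ref{def:homogeneous}) removes all $r$-dependence from $\theta,\Sigma,\Fit,\lambda$ and $\alpha$, the inner $dr'$-integral in the migration term and the outer $dr$-integral both factor out, yielding, for every $\psi\in\iC^2_b(\R^d)$,
\begin{equation*}
\frac{d}{dt}\int_{\R^d}\psi\,d\mu_t=\int_{\R^d}\mu_t(dx)\Big\{\tfrac{N-1}{2}\theta(x)\big(\Sigma(x)\cdot\nabla_1\Fit(x,x)\big)\cdot\nabla\psi(x)+\tfrac{\theta(x)}{2}\sum_{ij}\Sigma_{ij}(x)\partial^2_{x_ix_j}\psi(x)\Big\}+N^2\int_{\R^d}\mu_t(dx)\int_{\R^d}\mu_t(dy)\lambda(x,y)\alpha(y,x)\big[\psi(y)-\psi(x)\big].
\end{equation*}
This is exactly the forward equation of a McKean--Vlasov diffusion with jump kernel $N^2\lambda(x,y)\alpha(y,x)\mu_t(dy)$; well-posedness of $\mu_t$ follows from the Lipschitz continuity of $\theta\Sigma\cdot\nabla_2\Fit$ and $\sqrt{\theta}\,\sigma$ and the boundedness of $\lambda\alpha$ granted by Assumption~\ref{ass:Xuniq}, by a standard fixed-point/coupling argument. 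Plugging the ansatz $\xi_t(dr,dx)=\mu_t(dx)\,dr$ back into~\eqref{eq:metapop-canoniq} for a \emph{general} test function $\test(r,x)$ then shows, after collapsing the $dr'$-integral (which contributes a factor $\int_0^1 dr'=1$) and integrating the per-slice identity over $r\in[0,1]$, that the ansatz solves the full equation. By uniqueness of solutions to~\eqref{eq:metapop-canoniq}, it is the unique $\xi$.

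Next I would substitute this product form into the data for $X$ supplied by Theorem~\ref{theo:canoniq}. Homogeneity erases the dependence of the coefficients on the fixed location $z$, so the advection--diffusion part~\eqref{eq:cont-canoniq} reduces to the stated SDE, and the jump kernel becomes
\begin{equation*}
N^2\int_0^1\lambda(x,y)\alpha(y,x)\,\xi_{t-}(dr,dy)=N^2\int_{\R^d}\lambda(x,y)\alpha(y,x)\,\mu_{t-}(dy),
\end{equation*}
again using $\int_0^1 dr=1$. This is precisely the claimed kernel, provided the driving measure $\mu_{t-}$ coincides with the law of $X_{t-}$.

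The last and most delicate step is this self-consistency, which I expect to be the main obstacle (all other steps being elementary factorizations using homogeneity). Writing $\rho_t$ for the genuine law of $X_t$, the forward equation associated with the generator of $X$ (the diffusion part plus the kernel above) shows that $\rho_t$ solves the \emph{linear} Fokker--Planck equation in which $\mu_t$ enters only as a fixed, externally given driving measure. On the other hand, the nonlinear equation characterizing $\mu_t$ can be read as that same linear equation with $\rho$ replaced by $\mu$; hence $\mu$ is itself a solution of the linear problem. Since $\rho_0=\mu_0$ by the product initial condition, uniqueness for this linear Fokker--Planck equation---again a consequence of the regularity in Assumption~\ref{ass:Xuniq}---forces $\rho_t=\mu_t$ for all $t$. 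This closes the loop, identifying $\mu_t$ as the law of $X_t$ and completing the proof.
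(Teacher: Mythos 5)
Your proposal is correct and follows essentially the same route as the paper, which itself only says the proof is analogous to that of Corollary~\ref{cor:McKean-Vlasov}: establish the product form $\xi_t=\mu_t\otimes dr$ by uniqueness of~\eqref{eq:metapop-canoniq} under homogeneity, and then identify $\mu_t$ with the law of $X_t$ by showing both solve the same linear Kolmogorov forward equation (with $\mu$ frozen as the driving measure) from the same initial condition. Your final self-consistency step is exactly the $\Lambda_t$ argument of Section~\ref{sec:cor-theo-macro}, transposed to the jump--diffusion generator.
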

    
    \noindent We do not detail the proof of this corollary, which is similar to the one of Corollary~\ref{cor:McKean-Vlasov} given in Section~\ref{sec:cor-theo-macro}. 

\section{Proofs}\label{sec:proofs}
    This section is devoted to the proofs of the paper. Since most of these proofs are adaptations of relatively classical results, we will primarily provide proof sketches, highlighting the key ingredients that support them.
    
    \subsection{Proofs of Section~\ref{sec:propchaos}}\label{sec:cor-theo-macro}
    As explained in the main text, we will not present the proof of Theorem~\ref{theo:large-metapop} which is based on a classical argument of tightness and uniqueness of the limit. We will however describes sketches of proofs for the two following corollaries. 
        
        \paragraph{Proof of Corollary\,\ref{cor:prop-chaos}} The proof is relatively simple. Indeed, as soon as  $\nu_0$ is deterministic, it follows immediately from \eqref{eq:metapop-lim} that the entire process $(\nu_t)_{t\geq 0}$ is deterministic. Further, for any $j=1,\ldots,J$, notice that the jump rates and the law defining the jump sizes of $X^j$ only depend on its state before the jump ($X^j_{t-}$), on its own characteristics ($r^j$) and on $\nu$. As a consequence, $X^1,\ldots,X^J$ are independent inhomogeneous Markov processes.
        
        \paragraph{Proof of Corollary\,\ref{cor:McKean-Vlasov}} Let us assume now that the metapopulation is homogeneous in the sense of Definition\,\ref{def:homogeneous} and that the initial conditions $X^{1,K}_0,\ldots,X^{K,K}$ are i.i.d with common distribution $\mu_0\in\iM_1(\R^d)$. Then the limit process defined by \eqref{eq:metapop-lim} has the initial condition $\nu_0(dr,dx) = \mu_0(dx)dr$ and then we derive by simple computation and by uniqueness that there exists a measure valued deterministic process $(\mu_t(dx))_{t\geq 0}$ such that $\nu_t(dr,dx) = \mu_t(dx)dr$ for any $t>0$. Notice that the initial conditions of $X^1,\ldots,X^J$ are also i.i.d with common distribution $\mu_0$. In addition, these processes are independent (Corollary~\ref{cor:prop-chaos}) with the same infinitesimal generator that is the one of the pure jump stochastic process $X$ which jumps from $x$ to $y$ at the infinitesimal rate:
        $$ N\theta(x)\alpha(y,x)Q(x,dy)
            +
            N^2\lambda(x,y)\alpha(y,x)\mu_{t-}(dy).
        $$
        Let us now introduce the measure valued deterministic process defined by $\Lambda_t(dx) = \eP(X_t\in dx|X_0\sim\mu_0)$. According to the Kolomogorov equations, it is the unique continuous process with initial condition $\mu_0$ satisfying for any $f\in\iC_b(\R^d)$
        \begin{equation}
        \begin{aligned}
            \frac{d}{dt}\int_{\R^d}f(x)\Lambda_t(dx) & = N\int_{\R^d}\theta(x)\Lambda_t(dx)\int_{\R^d}\alpha(y,x)Q(x,dy)[f(y)-f(x)]
            \\
            &~ + N^2\int_{\R^d}\Lambda_t(dx)\int_{\R^d}\lambda(x,y)\alpha(y,x)\mu_t(dy)[f(y)-f(x)]
        \end{aligned}
        \end{equation}
        Further, the decomposition $\nu_t(dr,dx) = \mu_t(dx)dr$ and Eq.\,\eqref{eq:metapop-lim} imply that $(\mu_t)_{t\geq 0}$ is also a solution of this equation with initial condition $\mu_0$. As a consequence, we obtain $\mu_t(dx)=\Lambda_t(dx)$ for any $t\geq 0$. This ends the proof.

    \subsection{Results of Section\,\ref{sec:migr-small-mut}}\label{sec:proofs-migr}
        
\begin{proof}[Proof of Proposition~\ref{prop_freqmut}]
Let us first notice that the existence and uniqueness of $\nu\in\iC([0,T],\iM_1(\iX))$ as the solution of Eq.\,\eqref{eq:migration} holds since it corresponds to Eq.\,\eqref{eq:metapop-lim} with a mutation kernel $Q(r,x,dy) = \delta_x(dy)$. Let us now consider a $\beta-$H\"older continuous and bounded test function $\test\in\iC^\beta_b(\iX)$ such that $\|\test\|_{\iC^\beta_b(\iX)}\leq 1$, then
\begin{align*}
            \frac{d}{dt}\left\{ \langle\nu^\eps_t,\test\rangle - \langle\nu_t,\test\rangle \right\} & = N\iint_{\iX}\theta(r,x)\nu^\eps_t(dr,dx)\int_{\R^d}\alpha(r,x,x+\eps h)m(r,x,dh)\left[ \test(r,x+\eps h) - \test(r,x) \right]
            \\
            &\quad + N^2\iint_{\iX}\nu^\eps_t(dr,dx)\iint_{\iX}\nu^\eps_t(dr',dy)\lambda((r,x),(r',y))\alpha(r,y,x)\left[ \test(r,y) - \test(r,x) \right]
            \\
            &\quad - N^2\iint_{\iX}\nu_t(dr,dx)\iint_{\iX}\nu_t(dr',dy)\lambda((r,x),(r',y))\alpha(r,y,x)\left[ \test(r,y) - \test(r,x) \right]
            \\
            & \leq N\eps^{\beta}\iint_{\iX}\theta(r,x)\nu^\eps_t(dr,dx)\int_{\R^d}\alpha(r,x,x+\eps h)\|h\|^\beta m(r,x,dh)
            \\
            &\quad + N^2\iint_{\iX}\nu^\eps_t(dr,dx)\iint_{\iX}\nu^\eps_t(dr',dy)\lambda((r,x),(r',y))\alpha(r,y,x)\left[ \test(r,y) - \test(r,x) \right]
            \\
            &\quad - N^2\iint_{\iX}\nu_t(dr,dx)\iint_{\iX}\nu^\eps_t(dr',dy)\lambda((r,x),(r',y))\alpha(r,y,x)\left[ \test(r,y) - \test(r,x) \right]
            \\
            &\quad + N^2\iint_{\iX}\nu_t(dr,dx)\iint_{\iX}\nu^\eps_t(dr',dy)\lambda((r,x),(r',y))\alpha(r,y,x)\left[ \test(r,y) - \test(r,x) \right]
            \\
            &\quad - N^2\iint_{\iX}\nu_t(dr,dx)\iint_{\iX}\nu_t(dr',dy)\lambda((r,x),(r',y))\alpha(r,y,x)\left[ \test(r,y) - \test(r,x) \right].
        \end{align*}
        Since $\theta$ is bounded, $\test\in\iC^\beta_b(\iX)$ and $\lambda\alpha\in\iC^\beta_b(\iX\times\iX)$, we deduce that
        \[ \frac{d}{dt}\left\{ \langle\nu^\eps_t,\test\rangle - \langle\nu_t,\test\rangle \right\} \leq C_0\eps^\beta + C_1 \sup_{\substack{\psi\in\iC^\beta_b(\iX) \\ \|\psi\|_{\iC^\beta_b(\iX)}\leq 1}}\{\langle\nu^\eps_t,\psi\rangle - \langle\nu_t,\psi\rangle\} \]
        where $C_0 = N\|\theta\|_{\infty}\sup_{r,x}\langle m(r,x,\cdot),\|.\|^\beta\rangle$ ($<+\infty$ thanks to Assumption\,\ref{ass:mbound0}) and $C_1 = C_1(N,\|\lambda\alpha\|_{\iC^{\beta}_b(\iX\times\iX)})$. As a consequence,
        \begin{align*}
            \langle\nu^\eps_t,\test\rangle - \langle\nu_t,\test\rangle & \leq C_0\eps^\beta T + C_1\int_0^t \sup_{\substack{\psi\in\iC^\beta_b(\iX) \\ \|\psi\|_{\iC^\beta_b(\iX)}\leq 1}}\{\langle\nu^\eps_s,\psi\rangle - \langle\nu_s,\psi\rangle\} ds, \forall t\in [0,T]
        \end{align*}
        where we take the supremum with respect to $\phi$ and apply the Gronwall lemma that gives
        \[ \sup_{\substack{\psi\in\iC^\beta_b(\iX) \\ \|\psi\|_{\iC^\beta_b(\iX)}\leq 1}}\{\langle\nu^\eps_t,\psi\rangle - \langle\nu_t,\psi\rangle\} \leq C_0\eps^\beta Te^{C_1 t}. \]
        Taking the supremum with respect to $t\in[0,T]$ ends the proof. 
        \end{proof}

        \begin{proof}[Proof of Proposition~\ref{prop_finitetype}]
        The first step of this proof is to show that the solution to \eqref{eq:migration} has the form $\nu_t(dr,dx) = \sum_{i=1}^nw^i_t(r)\delta_{x^i}(dx)dr$ as its initial condition. Let us then introduce a set $A\subset[0,1]$ negligible with respect to the Lebesgue measure. Then using \eqref{eq:migration} with the test function $\test(r,x)=\ind_{r\in A}$, we obtain $\nu_t(A\times\R^d) = \nu_0(A\times\R^d) = 0$ and we conclude by Radon--Nikodym Theorem that there exists a family of probability measures $(\beta(r,dx))_{r\in[0,1]}$ on $\R^d$ such that $\nu_t(dr,dx) = \beta(r,dx)dr$. In order to conclude this first step, we will now show that the support of $\beta(r,dx)$ is contained in $\{x^1,\ldots,x^n\}$ for any $r\in[0,1]$. Let us introduce a measurable set $B\subset\R^d\setminus\{x^n,\ldots,x^n\}$, then using Eq.\,\eqref{eq:migration} with the test function $\test(r,x)=\ind_{x\in B}$, we obtain
        \begin{align*}
            \nu_t([0,1]\times B) & = N^2\int_0^t ds\iint_\iX\nu_s(dr,dx)\iint_\iX\nu_s(dr',dy)\lambda((r,x),(r',y))\alpha(r,y,x)\big[ \ind_{y\in B} - \ind_{x\in B} \big]
            \\
            & \leq N^2\|\lambda\alpha\|_{\infty}\int_0^t\nu_s([0,1]\times B)ds.
        \end{align*}
        Gronwall lemma then implies that $\nu_t([0,1]\times B) = 0$, for any $t\geq 0$. We conclude that $\nu_t(dr,dx) = \sum_{i=1}^nw^i_t(r)\delta_{x^i}(dx)dr$ for some non negative measurable functions $w^1,\ldots,w^n$. In addition, $\sum_{i=1}^n\int_0^1w^i_t(r)dr = \iint_{\iX}\nu_t(dr,dx) = 1$.
        \medskip
        
        We then deduce from \eqref{eq:migration} that for a fixed $i=1,\ldots,n$ and any test function of the form $\test_i(r,x) = \psi(r)\ind_{x=x^i}$ with $\psi\in\iC([0,1])$, we have
        \begin{align*}
            \frac{d}{dt}\int_0^1w^i_t(r)\psi(r)dr & = \frac{d}{dt} \iint_\iX \test_i(r,x)\nu_t(dr,dx)
            \\
            & = N^2\sum_{j,k=1}^n\int_0^1w^j_t(r)dr\int_0^1w^k_t(r')dr'\lambda((r,x^j),(r',x^k))\alpha(r,x^k,x^j)\big[ \test_i(r,x^k) - \test_i(r,x^j) \big]
            \\
            & = N^2\sum_{j=1}^n\int_0^1w^j_t(r)\psi(r)dr\int_0^1w^i_t(r')dr'\lambda((r,x^j),(r',x^i))\alpha(r,x^i,x^j)
            \\
            &~ - N^2\sum_{k=1}^n\int_0^1w^i_t(r)\psi(r)dr\int_0^1w^k_t(r')dr'\lambda((r,x^i),(r',x^k))\alpha(r,x^k,x^i).
        \end{align*}
        It follows that
        \begin{equation}\label{eq:weak-weights}
        \begin{aligned}
            \frac{d}{dt}\int_0^1w^i_t(r)\psi(r)dr & = N^2\int_0^1\psi(r)\sum_{j=1}^n\bigg\{ w^j_t(r)\alpha(r,x^i,x^j)\left( \int_0^1\lambda((r,x^j),(r',x^i))w^i_t(r')dr' \right) 
            \\
            &\qquad\qquad - w^i_t(r)\alpha(r,x^j,x^i)\left( \int_0^1\lambda((r,x^i),(r',x^j))w^j_t(r') dr' \right) \bigg\}dr
        \end{aligned}
        \end{equation}
        which ends the proof.
    \end{proof}
        
        \begin{proof}[Proof of Proposition\,\ref{prop:homogeneous} and Corollary\,\ref{cor:invasion}] Assuming that the metapopulation is homogeneous in the sens of Definition\,\ref{def:homogeneous}, we take $\psi\equiv 1$ in Eq.\,\eqref{eq:weak-weights} here above and obtain
        \begin{align*}
            \frac{d\overline{w}^i_t}{dt} & = N^2\int_0^1\sum_{j=1}^n\bigg\{ w^j_t(r)\alpha(x^i,x^j)\left( \int_0^1\lambda(x^j,x^i)w^i_t(r')dr' \right) - w^i_t(r)\alpha(x^j,x^i)\left( \int_0^1\lambda(x^i,x^j)w^j_t(r') dr' \right) \bigg\}dr
            \\
            & = \overline{w}^i_t\sum_{j=1}^n\underbrace{N^2\left\{ \alpha(x^i,x^j)\lambda(x^j,x^i) - \alpha(x^j,x^i)\lambda(x^i,x^j) \right\}}_{=:\, G(x^i,x^j)}\overline{w}^j_t\, ,\, \forall t\geq 0.
        \end{align*}
        Then denote by $S_t = \sum_{j\neq i^\star}\overline{w}^j_t$, then $\overline{w}^{i^\star}_t + S_t = 1$ and it follows that
        \[
            \frac{dS_t}{dt} = -\frac{d\overline{w}^{i^\star}_t}{dt} = -\overline{w}^{i^\star}_t\sum_{j=1}^nG(x^{i^\star},x^j)\overline{w}^{j}_t = -(1-S_t)\sum_{j\neq i^{\star}}G(x^{i^\star},x^j)\overline{w}^{j}_t \leq 0.
        \]
        Since $S_0 = 1-\overline{w}^{i^\star}_0<1$, we deduce that the function $t\mapsto S_t$ satisfies $S_t<1$ for any $t\geq 0$ and is decreasing. As a consequence, $S_t$ admits a limit when $t\to+\infty$ and, according to the previous equation, this limit satisfies that $\sum_{j\neq i^\star}G(x^{i^\star},x^j)\overline{w}^j_{\infty} = 0$, that is $S_{\infty} = 0$ and finally $\overline{w}^{i^\star}_{\infty} = 1$. In other words, the trait $x^{i^\star}$ invades the metapopulation.
     \end{proof}

    \subsection{Proofs of Section~\ref{sec:canonic}}\label{sec:proofcanonique}
        
        \paragraph{Proof of Theorem~\ref{theo:canoniq}} We use an approach based on arguments of tighness and uniqueness as in the proof of Theorem~\ref{theo:large-metapop}, which is relatively classical, which can be directly adapted from the techniques developed in Ethier \& Kurtz \cite{etk} and Fournier \& M\'el\'eard \cite{fourMel}. However, the time scaling introduces some difficulties that we will detail here. 
        
        Recall that $\nu^\eps\in\iC([0,T],\iM_1(\R^d))$ is characterized by the closed equation \eqref{eq:metapop-limeps} with the initial condition $\nu_0$. Then setting for all $t\geq 0$, $\xi^\eps_t := \nu^\eps_{t/\eps^2}$, we obtain for any test function $\test\in\iC^{2}_b(\iX)$,
        \begin{align*}
            & \frac{d}{dt}\int_{\R^d}\test(r,x)\xi^\eps_t(dr,dx) 
            \\
            &\qquad = \frac{N}{\eps^2}\iint_{\iX} \theta(r,x)\xi^\eps_t(dr,dx)\int_{\R^d}\alpha(r,x+\eps h,x)m(r,x,dh)\big[ \test(r,x+\eps h) - \test(r,x) \big]
            \\
            &\qquad~ + N^2\iint_{\iX}\xi^\eps_t(dr,dx)\iint_{\iX}\xi^\eps_t(dr',dy)\lambda((r,x),(r',y))\alpha(r,y,x)\big[ \test(r,y) - \test(r,x) \big]
            \\
            &\qquad = N\iint_{\iX} \theta(r,x)\xi^\eps_t(dr,dx)\int_{\R^d}\left(\frac{\alpha(r,x+\eps h,x) - \alpha(r,x,x)}{\eps}\right)\left( \frac{\test(r,x+\eps h) - \test(r,x)}{\eps} \right)m(r,x,dh)
            \\
            &\qquad~ + \frac{N\alpha(r,x,x)}{\eps^2}\iint_{\iX} \theta(r,x)\xi^\eps_t(dr,dx)\int_{\R^d}m(r,x,dh)\big[ \test(r,x+\eps h) - \test(r,x) \big]
            \\
            &\qquad~ + N^2\iint_{\iX}\xi^\eps_t(dr,dx)\iint_{\iX}\xi^\eps_t(dr',dy)\lambda((r,x),(r',y))\alpha(r,y,x)\big[ \test(r,y) - \test(r,x) \big]
        \end{align*}
        and then thanks to the Taylor formula,
        \begin{equation}
        \begin{aligned}
            & \frac{d}{dt}\int_{\R^d}\test(r,x)\xi^\eps_t(dr,dx) 
            \\
            &\qquad = N\iint_{\iX} \theta(r,x)\xi^\eps_t(dr,dx)\int_{\R^d}\left(\int_0^1h\cdot\nabla_2\alpha(r,x+a\eps h,x) da\right)\left( \int_0^1h\cdot\nabla_2\test(r,x+a\eps h)da \right)m(r,x,dh)
            \\
            &\qquad~ + \iint_{\iX} \theta(r,x)\xi^\eps_t(dr,dx)\int_{\R^d}\left( \int_0^1(1-a)h^T\cdot\nabla^2_2\test(r,x+a\eps h)\cdot hda \right)m(r,x,dh)
            \\
            &\qquad~ + N^2\iint_{\iX}\xi^\eps_t(dr,dx)\iint_{\iX}\xi^\eps_t(dr',dy)\lambda((r,x),(r',y))\alpha(r,y,x)\big[ \test(r,y) - \test(r,x) \big]
        \end{aligned}
        \end{equation}
        where we used that $\alpha(r,x,x) = 1/N$ (see Proposition~\ref{prop:TSS}).
        \medskip
        
        Further, let us denote $Y^\eps_t:=X^\eps_{\cdot/\eps^2}$ for all $t\geq 0$. According to~\eqref{eq:metapoplim2eps}, it is a pure jump inhomogeneous Markov process with the following transitions:
        \[
        x \xrightarrow[]{\textrm{jumps to}}\left\{
            \begin{aligned}
                &x + \eps h && \textrm{ at rate } \frac{N}{\eps^2}\theta(z,x)\alpha(z,x+\eps h,x) m(z,x,dh)
                \\
                &y && \textrm{ at rate } N^2\int_0^1\lambda((z,x),(r',y))\alpha(z,y,x) \xi^\eps_t(dr',dy).
            \end{aligned}\right.
        \]
        In other words, if we set for any test function $f\in\iC_b(\R^d)$ 
        \begin{align*}
            \iA^\eps f(x,\xi) & = \frac{N}{\eps^2}\theta(z,x)\int_{\R^d}\alpha(z,x+\eps h,x)[f(x+\eps h) - f(x) ]m(z,x,dh)
            \\
            &~ + N^2\iint_{\iX}\lambda((z,x),(r',y))\alpha(z,y,x)[f(y) - f(x)]\xi(dr',dy),
        \end{align*}
        then the stochastic process defined by
        \[ M^{\eps,f}_t = f(Y^\eps_{t}) - f(Y^\eps_0) - \int_0^t\iA^\eps f(Y^\eps_{s},\xi^\eps_s)ds \]
        is a Martingale with quadratic variation
        \begin{align*}
            \langle M^{\eps,f}\rangle_t & = N\int_0^t\theta(z,Y^\eps_{s})\int_{\R^d}\alpha(z,Y^\eps_{s}+\eps h,Y^\eps_{s})\left(\frac{f(Y^\eps_{s}+\eps h) - f(Y^\eps_{s})}{\eps} \right)^2m(z,Y^\eps_{s},dh)ds
            \\
            &~ + N^2\int_0^t\iint_{\iX}\lambda((z,Y^\eps_{s}),(r',y))\alpha(z,y,x)[f(y) - f(Y^\eps_{s})]^2\xi^\eps_s(dr',dy)ds.
        \end{align*}
        Noticing that
        \begin{align*}
            \iA^\eps f(x,\xi) & = N\theta(z,x)\int_{\R^d}\left(\frac{\alpha(z,x+\eps h,x)-\alpha(z,x,x)}{\eps}\right)\left(\frac{f(x+\eps h) - f(x)}{\eps} \right)m(z,x,dh)
            \\
            &~ + \frac{\theta(z,x)}{\eps^2}\int_{\R^d}[f(x+\eps h) - f(x) ]m(z,x,dh)
            \\
            &~ + N^2\iint_{\iX}\lambda((z,x),(r',y))\alpha(z,y,x)[f(y) - f(x)]\xi(dr',dy),
        \end{align*}
        classical techniques developed in Ethier \& Kurtz \cite{etk} and Fournier \& M\'el\'eard \cite{fourMel} allow to show that the sequence of laws $\{ \mathcal{L}\big((Y^\eps_{t},\xi^\eps_t)_{t\in[0,T]}\big), \eps>0\}$ is tight in the space $\mathcal{P}(\mathbb{D}([0,T],\R^d\times\iM_1(\iX)))$ of probability distributions on the Skorohod space $\D([0,T],\iM_1(\iX))$. The measure space $\iM_1(\iX)$ is endowed with the weak topology. It follows from the Prokhorov theorem that this sequence is relatively compact and then admits limit values given by $\mathcal{L}(X,\xi)$ that charges $\D([0,T],\R^d)\times\iC([0,T],\iM_1(\R^d))$. More precisely, the stochastic process $(X,\xi)$ whose initial condition is $(X_0,\xi_0)$ satisfies:
        \begin{align*}
            \frac{d}{dt}\int_{\iX}\test(r,x)\xi_t(dr,dx) & = N\iint_{\iX}\theta(r,x)\left(\Sigma(r,x)\cdot\nabla_2\alpha(r,x,x) \right)\cdot\nabla_2\test(r,x)\xi_t(dr,dx) 
            \\
            &~ + \frac{1}{2}\iint_{\R^d}\theta(r,x)\sum_{ij}\Sigma_{ij}(r,x)\partial_{x_ix_j}\test(r,x) \xi_t(dr,dx)
            \\
            &~ + N^2\iint_{\iX}\xi_t(dr,dx)\iint_{\iX}\xi_t(dr',dy)\lambda((r,x),(r',y))\alpha(r,y,x)\big[\test(r,y) - \test(r,x) \big]
        \end{align*}
        for any $\test\in\iC^{0,2}_b(\iX)$, and for any $f\in\iC^2_b(\R^d)$ the stochastic process
        \[ M^f_t = f(X_t) - f(X_0) - \int_0^t\iA f(X_s,\xi_s)ds, \forall t\in[0,T] \]
        where 
        \begin{align*}
            \iA f(x,\xi) & = N\theta(z,x)\left(\Sigma(z,x)\cdot\nabla_2\alpha(z,x,x)\right)\cdot\nabla f(x) + \frac{\theta(z,x)}{2}\sum_{ij}\Sigma_{ij}(z,x)\partial_{x_ix_j}f(x)
            \\
            &~ + N^2\iint_{\iX}\lambda((z,x),(r',y))\alpha(z,y,x)[f(y) - f(x)]\xi(dr',dy),
        \end{align*}
        is a c\`ad-l\`ag martingale with quadratic variation
        \begin{align*}
            \langle M^f\rangle_t & = \int_0^t\theta(z,X_s)\left( \Sigma(z,X_s)\cdot\nabla f(X_s) \right)\cdot\nabla f(X_s)ds
            \\
            &~ + N^2\int_0^t\iint_{\iX}\lambda((z,X_s),(r',y))\alpha(z,y,X_s)[f(y) - f(X_s)]^2\xi_s(dr',dy)ds .
        \end{align*}
        Let us first take a look at the equation satisfied by the limit value $\xi$ here above, which also corresponds to Eq.\,\eqref{eq:metapop-canoniq} in Theorem\,\ref{theo:canoniq} since
        \begin{align*}
            \nabla_2\alpha(r,x,x) & = \left. \frac{-1}{\left( 1 + \sum_{k=1}^{N-1}\left(\frac{c(r,y,x)}{c(r,x,y)} \right)^k \right)^2}\sum_{k=1}^{N-1}k\left(\frac{c(r,y,x)}{c(r,x,y)} \right)^{k-1}\frac{c(r,x,y)\nabla_2c(r,y,x) - c(r,y,x)\nabla_3c(r,x,y)}{c^2(r,x,y)}\, \right|_{y=x}
            \\
            & = -\frac{1}{N^2}\left( \sum_{k=1}^{N-1}k \right)\frac{\nabla_2c(r,x,x) - \nabla_3c(r,x,x)}{c(r,x,x)}
            \\
            & = \frac{N-1}{2N} \nabla_2\Fit(r,x,x)\, .
        \end{align*}
        For any $(r,x)\in\iX$, we introduce the semi-group defined by
        \[ P_t\test(r,x) = \E_x\left[ \test(r,X^{(r)}_t) \right], \forall t\in[0,T], \forall \test\in\iC_b(\iX) \]
        where $X^{(r)}$ is the strong solution of the SDE \eqref{eq:cont-canoniq} with $z=r$, which is unique thanks to Assumption\,\ref{ass:Xuniq}. Then $\xi$ satisfies the mild equation
        \begin{multline*}
            \int_{\iX}\test(r,x)\xi_t(dr,dx)  = \int_{\iX}P_t\test(r,x)\xi_0(dr,dx) 
            \\
             + \int_0^tds\iint_{\iX}\xi_s(dr,dx)\iint_{\iX}\xi_s(dr',dy)\lambda((r,x),(r',y))\alpha(r,y,x)\big[ P_{t-s}\test(r,y) - P_{t-s}\test(r,x) \big].
        \end{multline*}
        Assuming that there are two limit values $\xi^1,\xi^2$, it follows that for any test function $\test\in\mathbb{L}^{\infty}(\iX)$ with $\|\test\|_{\infty}\leq 1$ we obtain
        \begin{align*}
            & \iint_{\iX}\test(r,x)(\xi^1_t-\xi^2_t)(dr,dx) 
            \\
            &\qquad = \int_0^tds\iint_{\iX}\xi^1_s(dr,dx)\iint_{\iX}\xi^1_s(dr',dy)\lambda((r,x),(r',y))\alpha(r,y,x)\big[ P_{t-s}\test(r,y) - P_{t-s}\test(r,x) \big]
            \\
            &\qquad~ - \int_0^tds\iint_{\iX}\xi^2_s(dr,dx)\iint_{\iX}\xi^2_s(dr',dy)\lambda((r,x),(r',y))\alpha(r,y,x)\big[ P_{t-s}\test(r,y) - P_{t-s}\test(r,x) \big]
            \\
            &\qquad \leq C\int_0^t\|\xi^1_s-\xi^2_s\|_{TV}ds
        \end{align*}
        where $\|\cdot\|_{TV}$ is the total variation norm. Taking the supremum with respect to $\test$ and using the Gronwall lemma, we conclude that $\xi^1=\xi^2$. The limit process $\xi\in\iC([0,T],\iM_1(\iX))$ is then unique.
        \medskip
        
        We are now interested in the limit process $X$. From its description here above we deduce thanks to Lepeltier \& Marchal \cite{lepMar} that $X$ is a c\`ad-l\`ag jump diffusion process described by the SDE \eqref{eq:cont-canoniq} that is
        \[ dX_t = \frac{N-1}{2}\theta(z,X_t)\Sigma(z,X_t)\cdot\nabla_2\Fit(z,X_t,X_t)dt + \sqrt{\theta(z,X_t)}\sigma(z,X_t)\cdot dB_t \]
        (where $B$ is a $d-$dimensional Brownian motion), with an additional jump part with kernel
        \[ x \xrightarrow[]{\textrm{ jumps to }} y \textrm{ at rate }N^2\int_0^1\lambda((z,x),(r,y))\alpha(z,y,x)\xi_t(dr,dy). \]
        The strong uniqueness of $X$ follows directly from the uniqueness of $\xi$ and Assumption\,\ref{ass:Xuniq} that ensures strong uniqueness for the SDE.

\paragraph{Acknowledgements.} This work was done during a postdoctoral position of JTF funded by a grant of the \textit{Institut des math\'ematiques pour la plan\`ete Terre} entitled ``Micro-macro approach of the stochastic evolution of phenotypic trait diversity''. 

\newpage
\bibliographystyle{alpha}
\bibliography{biblio}

\newcommand{\etalchar}[1]{$^{#1}$}
\begin{thebibliography}{MGM{\etalchar{+}}95}

\bibitem[Ald78]{ald}
David Aldous.
\newblock Stopping times and tightness.
\newblock {\em Ann. Probab.}, 6:335--340, 1978.

\bibitem[AND13]{allen2013adaptive}
Benjamin Allen, Martin~A. Nowak, and Ulf Dieckmann.
\newblock Adaptive dynamics with interaction structure.
\newblock {\em The American Naturalist}, 181(6):E139--E163, 2013.

\bibitem[BBC17]{baar2017stochastic}
Martina Baar, Anton Bovier, and Nicolas Champagnat.
\newblock From stochastic, individual-based models to the canonical equation of
  adaptive dynamics-in one step.
\newblock {\em The Annals of Applied Probability}, 27(2):1093--1170, 2017.

\bibitem[BK04]{butler2004phylogenetic}
Marguerite~A Butler and Aaron~A King.
\newblock Phylogenetic comparative analysis: a modeling approach for adaptive
  evolution.
\newblock {\em The american naturalist}, 164(6):683--695, 2004.

\bibitem[BL94]{burger1994distribution}
Reinhard B{\"u}rger and Russell Lande.
\newblock On the distribution of the mean and variance of a quantitative trait
  under mutation-selection-drift balance.
\newblock {\em Genetics}, 138(3):901--912, 1994.

\bibitem[Bom83]{bomze1983lotka}
Immanuel~M Bomze.
\newblock Lotka-volterra equation and replicator dynamics: a two-dimensional
  classification.
\newblock {\em Biological cybernetics}, 48(3):201--211, 1983.

\bibitem[CC22]{cloez2022uniform}
Bertrand Cloez and Josu{\'e} Corujo.
\newblock Uniform in time propagation of chaos for a moran model.
\newblock {\em Stochastic Processes and their Applications}, 154:251--285,
  2022.

\bibitem[CD21]{chaintron2021propagation}
Louis-Pierre Chaintron and Antoine Diez.
\newblock Propagation of chaos: a review of models, methods and applications.
  {II. Applications}.
\newblock {\em Eprint arXiv:2106.14812}, 2021.

\bibitem[CD22]{chaintron2022propagation}
Louis-Pierre Chaintron and Antoine Diez.
\newblock Propagation of chaos: a review of models, methods and applications.
  {I. Models} and methods.
\newblock {\em Eprint arXiv:2203.00446}, 2022.

\bibitem[CFM06]{champagnat2006unifying}
Nicolas Champagnat, R{\'e}gis Ferri{\`e}re, and Sylvie M{\'e}l{\'e}ard.
\newblock Unifying evolutionary dynamics: from individual stochastic processes
  to macroscopic models.
\newblock {\em Theoretical population biology}, 69(3):297--321, 2006.

\bibitem[Cha06]{champagnat2006microscopic}
Nicolas Champagnat.
\newblock A microscopic interpretation for adaptive dynamics trait substitution
  sequence models.
\newblock {\em Stochastic processes and their applications}, 116(8):1127--1160,
  2006.

\bibitem[CJOV24]{cordero2024wright}
Fernando Cordero, Christian Jorquera, H{\'e}ctor Olivero, and Leonardo Videla.
\newblock Wright--fisher kernels: from linear to non-linear dynamics,
  ergodicity and mckean--vlasov scaling limits.
\newblock {\em Eprint arXiv:2410.13107}, 2024.

\bibitem[CL07]{chamLamb}
Nicolas Champagnat and Amaury Lambert.
\newblock Evolution of discrete populations and the canonical diffusion of
  adaptative dynamics.
\newblock {\em The Annals of Applied Probability}, 17(1):102--155, 2007.

\bibitem[DCPZ11]{PhysRevE.83.051108}
C.~H. Durney, S.~O. Case, M.~Pleimling, and R.~K.~P. Zia.
\newblock Saddles, arrows, and spirals: Deterministic trajectories in cyclic
  competition of four species.
\newblock {\em Phys. Rev. E}, 83:051108, May 2011.

\bibitem[DL96]{dieckmann1996dynamical}
Ulf Dieckmann and Richard Law.
\newblock The dynamical theory of coevolution: a derivation from stochastic
  ecological processes.
\newblock {\em Journal of mathematical biology}, 34:579--612, 1996.

\bibitem[DMM00]{del2000moran}
Pierre Del~Moral and Laurent Miclo.
\newblock A moran particle system approximation of feynman--kac formulae.
\newblock {\em Stochastic processes and their applications}, 86(2):193--216,
  2000.

\bibitem[EK86]{etk}
Stewart~N. Ethier and Thomas~G. Kurtz.
\newblock {\em Markov Processes : Characterization and Convergence}.
\newblock John Wiley \& Sons, Inc., 1986.

\bibitem[Fel73]{felsenstein1973maximum}
Joseph Felsenstein.
\newblock Maximum-likelihood estimation of evolutionary trees from continuous
  characters.
\newblock {\em American journal of human genetics}, 25(5):471, 1973.

\bibitem[FM04]{fourMel}
Nicolas Fournier and Sylvie M\'el\'eard.
\newblock A microscopic probabilistic description of a locally regulated
  population and macroscopic approximation.
\newblock {\em The Annals of Applied Probability}, 14(4):1880--1919, 2004.

\bibitem[GHH97]{gyllenberg1997structured}
Mats Gyllenberg, Alan Hastings, and Ilkka Hanski.
\newblock Structured metapopulation models.
\newblock In {\em Metapopulation biology}, pages 93--122. Elsevier, 1997.

\bibitem[GKF18]{PhysRevE.98.062316}
Philipp~M. Geiger, Johannes Knebel, and Erwin Frey.
\newblock Topologically robust zero-sum games and pfaffian orientation: How
  network topology determines the long-time dynamics of the antisymmetric
  lotka-volterra equation.
\newblock {\em Phys. Rev. E}, 98:062316, Dec 2018.

\bibitem[GM01]{gyllenberg2001fitness}
Mats Gyllenberg and JAJ Metz.
\newblock On fitness in structured metapopulations.
\newblock {\em Journal of Mathematical Biology}, 43:545--560, 2001.

\bibitem[Han97]{hansen1997stabilizing}
Thomas~F Hansen.
\newblock Stabilizing selection and the comparative analysis of adaptation.
\newblock {\em Evolution}, 51(5):1341--1351, 1997.

\bibitem[HLH05]{holyoak2005metacommunities}
Marcel Holyoak, Mathew Leibold, and Robert Holt.
\newblock {\em Metacommunities: Spatial Dynamics and Ecological Communities}.
\newblock The University of Chicago Press, 2005.

\bibitem[HM96]{hansen1996translating}
Thomas~F Hansen and Em{\'\i}lia~P Martins.
\newblock Translating between microevolutionary process and macroevolutionary
  patterns: the correlation structure of interspecific data.
\newblock {\em Evolution}, 50(4):1404--1417, 1996.

\bibitem[HP20]{hutzenthaler2020propagation}
Martin Hutzenthaler and Daniel Pieper.
\newblock Propagation of chaos and the many-demes limit for weakly interacting
  diffusions in the sparse regime.
\newblock {\em The Annals of Applied Probability}, 30(5):2311--2354, 2020.

\bibitem[KKWF13]{PhysRevLett.110.168106}
Johannes Knebel, Torben Kr\"uger, Markus~F. Weber, and Erwin Frey.
\newblock Coexistence and survival in conservative {Lotka-Volterra} networks.
\newblock {\em Phys. Rev. Lett.}, 110:168106, Apr 2013.

\bibitem[LA83]{lande1983measurement}
Russell Lande and Stevan~J Arnold.
\newblock The measurement of selection on correlated characters.
\newblock {\em Evolution}, pages 1210--1226, 1983.

\bibitem[Lan76]{lande1976natural}
Russell Lande.
\newblock Natural selection and random genetic drift in phenotypic evolution.
\newblock {\em Evolution}, pages 314--334, 1976.

\bibitem[Lan79]{lande1979quantitative}
Russell Lande.
\newblock Quantitative genetic analysis of multivariate evolution, applied to
  brain: body size allometry.
\newblock {\em Evolution}, pages 402--416, 1979.

\bibitem[Leh08]{lehmann2008adaptive}
Laurent Lehmann.
\newblock The adaptive dynamics of niche constructing traits in spatially
  subdivided populations: evolving posthumous extended phenotypes.
\newblock {\em Evolution}, 62(3):549--566, 2008.

\bibitem[LLMV24]{vo2024}
A.~Lambert, H.~Leman, H.~Morlon, and T.~Vo.
\newblock From microscopic to macroscopic models for the evolution of
  moran-type populations.
\newblock {\em In prep}, 2024.

\bibitem[LM76]{lepMar}
J-P. Lepeltier and B.~Marchal.
\newblock Problème des martingales et équations différentielles
  stochastiques associées à un opérateur intégro-différentiel.
\newblock {\em Annales de l'institut Henri Poincaré. Section B. Calcul des
  probabilités et statistiques}, 12(1):43--103, 1976.

\bibitem[LRR{\etalchar{+}}23]{lerch2023connecting}
Brian~A Lerch, Akshata Rudrapatna, Nasser Rabi, Jonas Wickman, Thomas Koffel,
  and Christopher~A Klausmeier.
\newblock Connecting local and regional scales with stochastic metacommunity
  models: Competition, ecological drift, and dispersal.
\newblock {\em Ecological Monographs}, 93(4):e1591, 2023.

\bibitem[LSL13]{landis2013phylogenetic}
Michael~J Landis, Joshua~G Schraiber, and Mason Liang.
\newblock Phylogenetic analysis using l{\'e}vy processes: finding jumps in the
  evolution of continuous traits.
\newblock {\em Systematic biology}, 62(2):193--204, 2013.

\bibitem[MGM{\etalchar{+}}95]{metz1995adaptive}
Johan~AJ Metz, Stefan~AH Geritz, G{\'e}za Mesz{\'e}na, Frans~JA Jacobs, and
  Joost~S Van~Heerwaarden.
\newblock Adaptive dynamics: a geometrical study of the consequences of nearly
  faithful reproduction.
\newblock 1995.

\bibitem[MMML20]{manceau2020model}
Marc Manceau, Julie Marin, H{\'e}l{\`e}ne Morlon, and Amaury Lambert.
\newblock Model-based inference of punctuated molecular evolution.
\newblock {\em Molecular Biology and Evolution}, 37(11):3308--3323, 2020.

\bibitem[Mor58]{moran1958random}
Patrick Alfred~Pierce Moran.
\newblock Random processes in genetics.
\newblock In {\em Mathematical proceedings of the cambridge philosophical
  society}, volume~54, pages 60--71. Cambridge University Press, 1958.

\bibitem[PDLM13]{papaix2013dynamics}
Julien Papaix, Olivier David, Christian Lannou, and Herve Monod.
\newblock Dynamics of adaptation in spatially heterogeneous metapopulations.
\newblock {\em PLoS One}, 8(2):e54697, 2013.

\bibitem[PH13]{pennell2013integrative}
Matthew~W Pennell and Luke~J Harmon.
\newblock An integrative view of phylogenetic comparative methods: connections
  to population genetics, community ecology, and paleobiology.
\newblock {\em Annals of the New York Academy of Sciences}, 1289(1):90--105,
  2013.

\bibitem[Szn91]{sznitman1991topics}
Alain-Sol Sznitman.
\newblock Topics in propagation of chaos.
\newblock {\em Ecole d’{\'e}t{\'e} de probabilit{\'e}s de Saint-Flour
  XIX—1989}, 1464:165--251, 1991.

\bibitem[WL14]{wakano2014evolutionary}
Joe~Yuichiro Wakano and Laurent Lehmann.
\newblock Evolutionary branching in deme-structured populations.
\newblock {\em Journal of theoretical biology}, 351:83--95, 2014.

\end{thebibliography}

\end{document}